\newtheorem{thm}{Theorem}[section]
\theoremstyle{definition}
\newtheorem{defn}[thm]{Definition}
\newtheorem{ex}[thm]{Example}
\theoremstyle{remark}
\newtheorem{rem}[thm]{Remark}
\numberwithin{equation}{section}
\newcommand{\st}{\colon} 
\newcommand{\sm}{\setminus}
\DeclareMathOperator{\im}{im}
\DeclareMathOperator{\inv}{inv}
\DeclareMathOperator{\rk}{rk}
\DeclareMathOperator{\Sig}{Sig}
\newcommand{\abs}[1]{\rvert#1\lvert}
\newcommand{\dju}{\mathbin{\mathaccent\cdot\cup}}
\newcommand{\covers}{\gtrdot}
\newcommand{\coveredby}{\lessdot}
\newcommand{\B}{\mathcal{B}}
\newcommand{\Qq}{\mathbb{Q}}
\newcommand{\Nn}{\mathbb{N}}
\newcommand{\Rr}{\mathbb{R}}
\newcommand{\Zz}{\mathbb{Z}}
\newcommand{\shiftgen}[1]{\langle\!\langle#1\rangle\!\rangle} 
\newcommand{\Red}[1]{{\color{red}{#1}}}
\newcommand{\Blue}[1]{{\color{blue}{#1}}}
\newcommand{\Green}[1]{{\color{ForestGreen}{#1}}}
\newcounter{x}
\newcounter{y}
\newcounter{z}
\newcommand\xaxis{210}
\newcommand\yaxis{-30}
\newcommand\zaxis{90}
\newcommand\topside[3]{
  \fill[fill=blue, draw=black,thick,shift={(\xaxis:#1)},shift={(\yaxis:#2)},
  shift={(\zaxis:#3)}] (0,0) -- (30:1) -- (0,1) --(150:1)--(0,0);
  \fill[pattern=vertical lines, pattern color=black, draw=black,thick,shift={(\xaxis:#1)},shift={(\yaxis:#2)},
  shift={(\zaxis:#3)}] (0,0) -- (30:1) -- (0,1) --(150:1)--(0,0);
}
\newcommand\leftside[3]{
  \fill[fill=green, draw=black,thick,shift={(\xaxis:#1)},shift={(\yaxis:#2)},
  shift={(\zaxis:#3)}] (0,0) -- (0,-1) -- (210:1) --(150:1)--(0,0);
  \fill[pattern=north west lines, pattern color=black, draw=black,thick,shift={(\xaxis:#1)},shift={(\yaxis:#2)},
  shift={(\zaxis:#3)}] (0,0) -- (0,-1) -- (210:1) --(150:1)--(0,0);
}
\newcommand\rightside[3]{
  \fill[fill=red, draw=black,thick,shift={(\xaxis:#1)},shift={(\yaxis:#2)},
  shift={(\zaxis:#3)}] (0,0) -- (30:1) -- (-30:1) --(0,-1)--(0,0);
  \fill[pattern=grid, pattern color=black, draw=black,thick,shift={(\xaxis:#1)},shift={(\yaxis:#2)},
  shift={(\zaxis:#3)}] (0,0) -- (30:1) -- (-30:1) --(0,-1)--(0,0);
}
\newcommand\cube[3]{
  \topside{#1}{#2}{#3} \rightside{#1}{#2}{#3}
  \leftside{#1}{#2}{#3} 
}
\newcommand\planepartition[1]{
 \setcounter{x}{-1}
  \foreach \a in {#1} {
    \addtocounter{x}{1}
    \setcounter{y}{-1}
    \foreach \b in \a {
      \addtocounter{y}{1}
      \setcounter{z}{-1}
      \foreach \c in {1,...,\b} {
        \addtocounter{z}{1}
        \cube{\value{x}}{\value{y}}{\value{z}}
      }
    }
  }
}
\author[A. M. Duval]{Art M.\ Duval}
\address{Department of Mathematical Sciences\\ University of Texas at El Paso}
\email{aduval@utep.edu}
\thanks{AMD was supported in part by Simons Collaboration Grant \#516801.}
\author[W. Kook]{Woong Kook}
\address{Department of Mathematical Sciences, Seoul National University, Seoul, Republic of Korea.}
\email{woongkook@snu.ac.kr}
\thanks{WK and KL were supported in part 
by the National Research Foundation of Korea (NRF) Grants funded by the Korean Government (MSIP) (No.RS-2022-00165404).}
\author[K.-J. Lee]{Kang-Ju Lee}
\address{Research Institute of Mathematics, Seoul National University, Seoul, Republic of Korea.}
\email{leekj0706@snu.ac.kr}
\thanks{KL was supported in part by the National Research Foundation of Korea (NRF) Grants funded by the Korean Government (MSIP) (No.2021R1C1C2014185).}
\author[J. L. Martin]{Jeremy L.\ Martin}
\address{Department of Mathematics\\ University of Kansas}
\email{jlmartin@ku.edu}
\thanks{JLM was supported in part by Simons Collaboration Grant \#315347.}
\title[Simplicial resistance and spanning trees]{Simplicial effective resistance and enumeration of spanning trees}
\subjclass[2020]{Primary 
05E45; 
Secondary
05C05, 
05C50, 
31C20, 
94C15} 
\keywords{simplicial effective resistance, simplicial spanning tree, recursive structure, color-shifted complex, shifted complex}
\date{\today}
\begin{document}

\begin{abstract}
A graph can be regarded as an electrical network in which each edge is a resistor.  This point of view relates combinatorial quantities, such as the number of spanning trees, to electrical ones such as effective resistance.  The second and third authors have extended the combinatorics/electricity analogy to higher dimension and expressed the simplicial analogue of effective resistance as a ratio of weighted tree enumerators.  In this paper, we first use that ratio to prove a new enumeration formula for color-shifted complexes, confirming a conjecture by Aalipour and the first author, and generalizing a result of Ehrenborg and van Willigenburg on Ferrers graphs.  We then use the same technique  to recover an enumeration formula for shifted complexes, first proved by Klivans and the first and fourth authors.  In each case, we add facets one at a time, and give explicit expressions for simplicial effective resistances of added facets by constructing high-dimensional analogues of currents and voltages (respectively homological cycles and cohomological cocycles).
\end{abstract}
\maketitle

\section{Introduction}
This paper is about counting spanning trees in simplicial complexes by viewing them as higher-dimensional electrical networks.

The idea of representing electrical networks as graphs can be traced back to the foundational work of Kirchhoff~\cite{Kir}.  The reverse point of view, using the electrical model to study the combinatorics of a graph, is more recent; see, e.g., \cite{Biggs,BB}.
Kirchhoff's fundamental laws on current and voltages are naturally expressed in the algebraic graph theory language of flows and cuts.
We regard a graph as a network in which each edge has unit resistance, then attach a new edge $e$ carrying a specified amount of current.  The resulting currents and potentials must satisfy Ohm's and Kirchhoff's network laws, and can be determined via the (discrete) Dirichlet principle: the system will arrange itself so as to minimize energy.  A key quantity of this model, the \textit{effective resistance} between the endpoints of~$e$, has explicit combinatorial meaning: it equals the fraction of spanning trees of $G+e$ that contain~$e$ \cite[Prop.~17.1]{Biggs}.  For graph families with a recursive structure, this calculation can serve as the inductive step in giving a closed formula for the number of spanning trees.  Assigning an indeterminate resistance $r_e$ to each edge $e$ leads to analogous formulas for \textit{weighted} spanning tree enumerators, in which each edge is weighted by $1/r_e$.

Here we consider generalizations of these ideas from graphs to simplicial complexes.  The theory of simplicial spanning trees was pioneered by Kalai~\cite{K} and developed subsequently by Adin~\cite{A}, Petersson~\cite{Petersson}, Lyons~\cite{Ly}, Catanzaro--Chernyak--Klein \cite{CCK1,CCK2}, and Klivans and the first and fourth authors~\cite{DKM1,DKM2}; for a recent overview, see~\cite{DKM}. 
Meanwhile, the authors in \cite{CCK1} initiated the study of high-dimensional electrical networks, extending Kirchhoff's current and voltage laws via algebraic topology.
The second and third authors~\cite{KL1} introduced effective resistance for simplicial complexes and showed that it carries the same combinatorial interpretation as in the graph case.
We give the necessary background in the first part of the paper.  Section~2 reviews standard facts about simplicial complexes and homology; Section 3 concerns simplicial spanning trees and their enumeration; and Section 4 develops the theory of simplicial networks, including its connection to higher-dimensional spanning trees.

In the main part of the paper, Sections 5 and 6, we apply the theory of~\cite{KL1} to give explicit enumeration formulas
for spanning trees of \textit{shifted} and \textit{color-shifted} simplicial complexes, which we will define shortly.  
Most previous computations of tree enumeration of higher-dimensional complexes, including standard simplices \cite{K}, complete colorful \cite{A}, shifted \cite{DKM1}, cubical \cite{DKM2}, and matroid \cite{KL} complexes,
relied on first computing the Laplacian eigenvalues of the complexes, and then applying some version of the Matrix-Tree Theorem~\cite[Secs.~3.4-3.5]{DKM}.  Color-shifted complexes are not amenable to using the Matrix-Tree Theorem this way, because their Laplacian eigenvalues are not nice; even in the unweighted case, the eigenvalues are generally not integers.

To define color-shifted complexes, we regard the numbers $q=1,\dots,d+1$ as colors, and let $V_q=\{v_{q,1},\dots,v_{q,n_q}\}$ be a linearly ordered set of vertices of color $q$.  A pure $(d+1)$-dimensional complex on $V=V_1\cup\cdots\cup V_{d+1}$ is \textit{balanced} if each facet contains one vertex of each color; see \cite[Sec.~III.4]{CCA}.  A balanced simplicial complex $\Delta$ is \textit{color-shifted} if any vertex of a face may be replaced with a smaller vertex of the same color to obtain a new face.  Equivalently, the facets form an order ideal in the Cartesian product $V_1\times\cdots\times V_{d+1}$.
The canonical spanning tree is the subcomplex of faces having at least one vertex that is minimal in its color class.
Color-shifted complexes were introduced by Babson and Novik \cite{BN} and generally behave like multipartite analogues of shifted complexes.  A color-shifted complex of dimension~1 is known as a \textit{Ferrers graph}, since it can be described by a Ferrers diagram in which rows are red vertices, columns are blue vertices, and squares are edges.  Ehrenborg and van~Willigenburg \cite[Prop.~2.2]{EW} used electrical graph theory to count spanning trees of Ferrers graphs.  G.~Aalipour and the first author conjectured a more general formula~\cite[eqn.~28]{DKM} and proved the 2-dimensional case using classical methods.  Here we use simplicial effective resistance to prove the general version, which appears as Theorem~\ref{thm:CSC-enumeration}. 

Adin~\cite{A} counted the spanning trees of the \textit{complete colorful complex}, whose facets are all the $(d+1)$-sets with one vertex of each color; his formula confirmed a conjecture of Bolker~\cite[eqn.~20]{Bolker}.  Building on Adin's methods, Aalipour and the present authors~\cite[Thm.~1.2]{ADKLM} computed the corresponding vertex-weighted enumerator.  In principle, one can enumerate spanning trees of any color-shifted complex~$\Delta$ (indeed, any balanced complex) by setting all the weights of nonfaces to zero, i.e., working in the Stanley-Reisner ring of $\Delta$, although it is unclear whether this method will produce formulas that one can write down.  On the other hand, these earlier formulas can be obtained as a special case of Theorem~\ref{thm:CSC-enumeration} (since complete colorful complexes are certainly color-shifted), as we explain in Example~\ref{ex:ccc}.

A (pure) simplicial complex $\Delta$ on vertex set $[n]=\{1,\dots,n\}$ is called \textit{shifted} if any vertex of a face may be replaced with a smaller vertex to produce another face.  That is, if $\sigma\in\Delta$ and $1\leq i<j\leq n$ with $j\in\sigma$ and $i\not\in\sigma$, then $\sigma\setminus j\cup i\in\Delta$.
Equivalently, define the Gale (partial) order $\leq$ on $k$-subsets of $[n]$ as follows: if $a=\{a_1<\ldots<a_k\}$ and $b=\{b_1<\ldots<b_k\}$, then $a\leq b$ iff if $a_i \leq b_i$ for all $1 \leq i \leq k$.  Then a complex is shifted if its facets form an order ideal with respect to Gale order.  The canonical spanning tree is the star of vertex~1.  
Shifted families of sets were introduced (though not with that name) by Erd\"os, Ko, and Rado~\cite{EKR} to prove results in extremal combinatorics, where they have since been used extensively (see for instance~\cite{Frankl}).  Bj\"orner and Kalai~\cite{BK} characterized the $f$-vectors and Betti sequences of simplicial complexes using algebraic shifting, which was introduced by Kalai~\cite{K-f} and which produces a shifted complex with some of the same features as the original complex.  Shifted complexes enjoy numerous other good properties:  
for example, they correspond algebraically to Borel-fixed monomial ideals~\cite{Shifting} and
have integer Laplacian eigenvalues~\cite{DR}.  In dimension~1 they specialize to threshold graphs.  Weighted tree enumerators of shifted complexes were studied in~\cite{DKM1}.  Here we use simplicial effective resistance to obtain the weighted formula \cite[eqn.~25]{DKM}, which we restate as Theorem~\ref{thm:shifted}.
 
To enumerate the trees of a color-shifted or shifted complex $\Delta$, we construct $\Delta$ inductively by starting with a canonical spanning tree and adding facets one at a time so that every subcomplex obtained along the way is color-shifted or shifted, respectively.  The key step is computing explicit expressions for simplicial effective resistances of added facets, which appear as Theorems~\ref{thm:CSC-ratio} and ~\ref{thm:SC-ratio}, respectively.  We obtain these expressions by constructing high-dimensional currents (homological cycles) and voltages (cohomological cycles).
This technique can be replicated for other families $\mathscr{F}$ of simplicial (or even cellular) complexes given the right ingredients:
First, all complexes in $\mathscr{F}$ share a canonical spanning tree $T$.
Second, we need a partial order on all facets of complexes in $\mathscr{F}$, such that  (i) $T$ is an order ideal and (ii) the boundary of each facet not in $T$ lies in the complex generated by all smaller facets.
Third, for each complex $\Delta\in\mathscr{F}$ and each $\leq$-minimal facet $\sigma\not\in\Delta$, we need an expression for the effective resistance of $\sigma$ with respect to $\Delta$ that is independent of $\Delta$ itself.
If these conditions are met, then any linear extension of the partial order of facets will allow the inductive computation of tree enumeration by simplicial effective resistance.

\textbf{Acknowledgments.} We are grateful to an anonymous referee for a very careful reading of the article, resulting in improved exposition throughout and a simplification of one of the technical arguments in Theorem~\ref{thm:SC-ratio}.

\section{Preliminaries} \label{sec:prelim}
Let $[n]$ denote $\{1,2,\dots,n\}$ for a natural number $n$ and $[a,b]$ denote $\{a,a+1,\dots,b-1,b\}$ for integers $a,b$. For a set $\tau$ and an element $j$, we will use the abbreviated notations $\tau \cup j$ and $\tau \setminus j$ for $\tau \cup \{j\}$ and $\tau \setminus \{j\}$, respectively. We refer to standard texts (e.g., \cite{Mu,Hatcher}) for basic definitions and background in algebraic topology. 

\subsection{Abstract simplicial complexes}
For a finite set $V$, a non-empty collection $\Delta$ of subsets of $V$ is called an (abstract) \emph{simplicial complex} if $\Delta$ is closed under inclusion, i.e., if $\tau \in \Delta$ and $\tau' \subseteq \tau$, then $\tau' \in \Delta$. The set $V$ is called the \emph{vertex set} of $\Delta$; we will assume that the elements of $V$ are ordered. The elements of $\Delta$ are called \emph{faces}; a maximal face is called a \emph{facet}.  The dimension of a face $\tau \in \Delta$ is $\dim \tau = |\tau|-1$, and the dimension of $\Delta$ is the maximum dimension of the elements in $\Delta$.  The complex $\Delta$ is \emph{pure} if all facets have the same dimension.  The collection of all $i$-dimensional faces of $\Delta$ is denoted by $\Delta_i$; in particular $\Delta_{-1}=\{\emptyset\}$. The $i$th \emph{skeleton} of $\Delta$ is the subcomplex $\Delta^{(i)}=\bigcup_{-1\leq j \leq i} \Delta_{j}$.  We will sometimes allow two facets to share the same set of proper subfaces, so that we are really working in the slightly larger class of $\Delta$-complexes \cite[Sec.~2.1]{Hatcher}.

\subsection{Boundary operators and chain groups} \label{sec:chain}
For an $(i+1)$-set $\tau=\{v_{j_1},v_{j_2},\dots,v_{j_{i+1}}\}$ with $j_1< j_2 <\dots<j_{i+1}$, the \emph{oriented simplex} $[\tau]$ is defined to be
\[
[\tau]=[v_{j_1},v_{j_2},\dots,v_{j_{i+1}}]=(-1)^{\inv(\pi)}[v_{j_{\pi(1)}},v_{j_{\pi(2)}},\dots,v_{j_{\pi(i+1)}}],
\]
where $\pi$ is a permutation on $[i+1]$, and $\inv(\pi)$ is the number of inversions in $\pi$. Define the \emph{boundary operator} $\partial_{i}$ by 
\[
\partial_{i}[\tau]=\sum_{\hat{\imath}=1}^{i+1}(-1)^{\hat{\imath}-1}[\tau\setminus v_{j_{\hat{\imath}}}].
\] 
Then $\partial_{i}\partial_{i+1}=0$ for each natural number $i$. 

Let $R$ be a commutative ring.  The $i$th \emph{chain group of $\Delta$ with coefficients in $R$} is the free $R$-module $\mathcal{C}_i(\Delta)=\mathcal{C}_i(\Delta;R)\cong R^{|\Delta_i|}$ with basis the oriented simplices $[\tau]$ for $\tau\in \Delta_i$.
An element $c \in \mathcal{C}_i(\Delta)$ can be written as 
\[
c=\sum_{\tau \in \Delta_i}c_{\tau}[\tau],
\]
and this element will be regarded as a vector $(c_{\tau})_{\tau \in \Delta_i}$ of length $|\Delta_i|$.  Define an inner product $\langle\,,\,\rangle$ on $\mathcal{C}_{i}(\Delta)$ by declaring the set of oriented simplices to be an orthonormal basis: that is, $\langle c,c'\rangle=\sum_{\tau\in \Delta_i} c_\tau c'_\tau$.

The $i$th \emph{boundary operator} $\partial_{\Delta,i}=\partial^R_{\Delta,i}\colon \mathcal{C}_{i}(\Delta;R) \to \mathcal{C}_{i-1}(\Delta;R)$ is defined by $\partial_{\Delta,i}[\sigma]=\partial_{i}[\sigma]$ for each $i$-face $\tau \in \Delta_d$.  In particular, the $0$th boundary operator $\partial_{\Delta,0}\colon \mathcal{C}_{0}(\Delta)\to \mathcal{C}_{-1}(\Delta) \cong R$ is defined by $\partial_{\Delta,0}[v]=1$ for each $v\in \Delta_{0}$. 
The boundary operator $\partial_{\Delta,i}$ may be regarded as an $|\Delta_{i-1}| \times |\Delta_i|$ matrix with respect to the standard bases.

The elements of $\im\partial_{\Delta,i+1}$ are called $i$-\emph{boundaries} of $\Delta$, the elements of $\ker \partial_{\Delta,i}$ are $i$-\emph{cycles}, and the elements of the orthogonal complement $(\ker \partial_{\Delta,i})^\perp$ are $i$-\emph{cocycles}.

\subsection{Homology groups and torsion subgroups}

For $0\leq i\leq d$, the $i$th \emph{(reduced) homology of $\Delta$ with coefficients in $R$} is $\tilde{H}_{i}(\Delta;R) = \ker\partial^R_{\Delta,i}\, / \im\partial^R_{\Delta,i+1}$. In the important case $R=\Zz$, the homology $\tilde{H}_{i}(\Delta;\Zz)$ is a finitely generated abelian group, so it can be decomposed as 
\[
\tilde{H}_{i}(\Delta;\Zz)\equiv \Zz^{b_i(\Delta)} \oplus \mathbf{T}(\tilde{H}_{i}(\Delta,\Zz)),
\]
where $b_i(\Delta)=\rk\tilde{H}_{i}(\Delta)$ is the $i$th (topological) \emph{Betti number} of $\Delta$, and $\mathbf{T}(\cdot)$ is the \emph{torsion subgroup}. For simplicity, we define
\[
\mathbf{t}_i(\Delta)=|\mathbf{T}(\tilde{H}_{i}(\Delta;\Zz))|.
\]
A $d$-complex $\Delta$ such that $\tilde{H}_i(\Delta;R)=0$ for $i<d$ is called $R$-\textit{acyclic in positive codimension} (for short, $R$-APC).  Note that being $\Rr$-APC is equivalent to the condition that all Betti numbers vanish; this is a weaker condition than being $\Zz$-APC.  

\section{Simplicial spanning trees and their enumeration} \label{sec:SST}
\subsection{Simplicial spanning trees and tree-numbers}

We begin by reviewing the theory of \emph{simplicial spanning trees}, pioneered by Kalai \cite{K}.  Let $\Delta$ be a $d$-dimensional simplicial complex on vertex set $V$; we may assume $\Delta$ is pure. For $i \in[0, d]$, an
$i$-\emph{dimensional spanning tree} (or simply $i$-\emph{tree}) is a subcomplex $\Upsilon$ of dimension~$i$ such that
\begin{equation} \label{conditions-for-tree}
\Upsilon^{(i-1)}=\Delta^{(i-1)}, \quad b_{i-1} (\Upsilon)=b_{i-1} (\Delta),\quad \text{and}\quad b_{i}(\Upsilon)=0.
\end{equation}
In fact any two of these conditions together imply the third.  Let $\mathcal{T}_{i}(\Delta)$ be the collection of all $i$-trees $\Upsilon$ of $\Delta$. The $i$th \emph{tree-number} $k_{i}(\Delta)$ of $\Delta$ is defined to be
\[
k_{i}(\Delta)=\sum_{\Upsilon\in \mathcal{T}_{i}(\Delta)}{|\mathbf{T}(\tilde{H}_{i-1}(\Upsilon))|^2}=\sum_{\Upsilon\in \mathcal{T}_{i}(\Delta)}{\mathbf{t}_{i-1}(\Upsilon)^2}.
\]
Note that if $\Delta=G$ is a connected graph and $i=1$, then $k_{1}(G)$ is equal to the number of spanning trees of $G$.

Now assign a weight $x_\tau$ to each $d$-face $\tau \in \Delta_d$, and let $x_\Upsilon=\prod_{\tau \in \Upsilon_d}x_\tau$.  The \emph{weighted tree-number} $\hat{k}_d(\Delta)$ of $\Delta$ with these weights is
\begin{equation}\label{eq:WTN}
\hat{k}_d(\Delta)=
\sum_{\Upsilon\in\mathcal{T}_d(\Delta)}x_{\Upsilon}\,{\mathbf{t}_{d-1}(\Upsilon)^2}.
\end{equation}
If we introduce vertex weights $x_v$ and set $x_{\tau}=\prod_{v \in \tau}x_v$, then~\eqref{eq:WTN} becomes
\[
\hat{k}_d(\Delta)=
\sum_{\Upsilon\in\mathcal{T}_d(\Delta)}\prod_{\tau \in \Upsilon_d}\left(\prod_{v \in \tau} x_{\tau}\right){\mathbf{t}_{d-1}(\Upsilon)^2}=\sum_{\Upsilon\in\mathcal{T}_d(\Delta)}\left(\prod_{v \in V}x_{v}^{\deg_\Upsilon v}\right){\mathbf{t}_{d-1}(\Upsilon)^2}.
\]

\begin{ex}\label{eq:octa}
Let $\Delta$ be the octahedron with vertices $v_{q,j_q}$ for $q=1,2,3$ and $j_q=1,2$, and facets $v_{1,j_1}v_{2,j_2}v_{3,j_3}$ for $j_1,j_2,j_3=1,2$.  (See Figure~\ref{Fig:Octa}.)  The 2-dimensional spanning trees of $\Delta$ are the subcomplexes obtained by deleting a single facet.  Every such tree is contractible, hence torsion-free.  (This description of spanning trees is characteristic of a simplicial sphere.) Letting $x_{q,j_q}$ be the weight of vertex $v_{q,j_q}$, the $2$-dimensional weighted tree-number is
\begin{align*}
\hat{k}_2({\Delta})&=
\sum_{j_1,j_2,j_3=1,2}\dfrac{(x_{1,1}x_{1,2}x_{2,1}x_{2,2}x_{3,1}x_{3,2})^4}{x_{1,j_1}x_{2,j_2}x_{3,j_3}}
\\
&=
(x_{1,1}x_{1,2}x_{2,1}x_{2,2}x_{3,1}x_{3,2})^3(x_{1,1}+x_{1,2})(x_{2,1}+x_{2,2})(x_{3,1}+x_{3,2}).
\end{align*}
\end{ex}

\begin{figure}[ht]
\begin{center}

\begin{tikzpicture}[thick,scale=4.5]
\coordinate [label=left:$v_{2,2}$] (A1) at (0,0);
\coordinate [label=right:$v_{1,2}$] (A2) at (0.6,0.2);
\coordinate [label=right:$v_{2,1}$] (A3) at (1,0);
\coordinate [label=below:$v_{1,1}$] (A4) at (0.4,-0.2);
\coordinate [label=above:$v_{3,1}$] (B1) at (0.5,0.5);
\coordinate [label=below:$v_{3,2}$] (B2) at (0.5,-0.5);
	
\begin{scope}[thick,dashed,,opacity=0.6]
\draw (A1) -- (A2) -- (A3);
\draw (B1) -- (A2) -- (B2);
\end{scope}
\draw[fill=black,opacity=0.2] (A1) -- (A4) -- (B1);
\draw[fill=black,opacity=0.2] (A1) -- (A4) -- (B2);
\draw[fill=black,opacity=0.2] (A3) -- (A4) -- (B1);
\draw[fill=black,opacity=0.2] (A3) -- (A4) -- (B2);
\draw (B1) -- (A1) -- (B2) -- (A3) --cycle;
\end{tikzpicture}
\caption{The octahedron of Example~\ref{eq:octa}.\label{Fig:Octa}}
\end{center}
\end{figure}
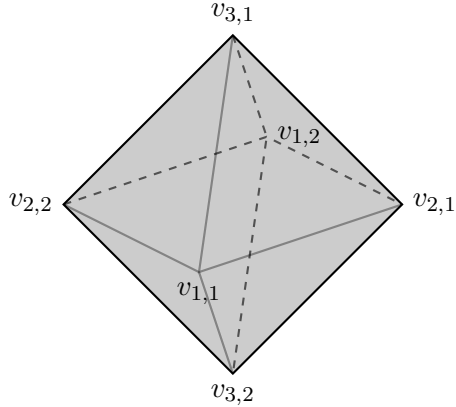

\subsection{Adding a facet}\label{sec:ET}
Let $\Psi$ be a $d$-dimensional simplicial complex and let $\sigma$ be a $(d+1)$-set that is not a face of $\Psi$ (in fact, the vertices of $\sigma$ are not required to be vertices of $\Psi)$.  Let $\Delta$ be the simplicial complex obtained from $\Psi$ by attaching $\sigma$ as a new simplex, i.e., $\Delta=\Psi\cup 2^{\sigma}$ where $2^{\sigma}$ is the power set of $\sigma$. In particular, $\Delta_d=\Psi_d\cup \{\sigma\}$.  Assign $\sigma$ the weight $x_{\sigma}$.
For the purpose of enumerating simplicial spanning trees, we are interested in the ratio of tree-numbers
$\hat{k}_d(\Delta)/\hat{k}_d(\Psi)$.

This tree-number ratio is easy to determine when $\Psi$ does not contain all proper subsets of~$\sigma$.  In this case, the map $\mathcal{T}_d (\Psi)\to\mathcal{T}_d(\Delta)$ defined by $\Upsilon \mapsto \Upsilon \cup \sigma$ is a torsion-preserving bijection. Therefore,
\begin{equation}\label{eq:rationotcg}
\dfrac{\hat{k}_d(\Delta)}{\hat{k}_d(\Psi)}=x_\sigma.
\end{equation}
In particular, if $\Psi$ is a connected graph (so $d=1$) and $\sigma$ is an edge with exactly one vertex in $\Psi$, then the spanning trees of $\Delta$ are precisely the graphs obtained by adjoining the edge $\sigma$ to a spanning tree of~$\Psi$.

When all proper subsets of $\sigma$ are contained in $\Psi$, the ratio $\hat{k}_d(\Delta)\,/\,\hat{k}_d(\Psi)$ is related to the combinatorial interpretation of its \emph{simplicial effective resistance}, defined in~\eqref{def:sef}.  By calculating the ratio $\hat{k}_d(\Delta)/\hat{k}_d(\Psi)$ and applying induction, we will derive the weighted tree-numbers of color-shifted complexes (Section~\ref{sec:color-shifted}) and shifted complexes (Section~\ref{sec:shifted}).

\begin{ex}\label{ex:octa_ratio} If $\Delta$ is the weighted octahedron given in Example~\ref{eq:octa} and $\Psi$ is the subcomplex obtained by deleting the facet $\sigma=v_{1,2}v_{2,2}v_{3,2}$, then
\begin{align*}
\dfrac{\hat{k}_2(\Delta)}{\hat{k}_2(\Psi)}
&= \dfrac{(x_{1,1}x_{1,2}x_{2,1}x_{2,2}x_{3,1}x_{3,2})^3(x_{1,1}+x_{1,2})(x_{2,1}+x_{2,2})(x_{3,1}+x_{3,2})}{(x_{1,1}x_{1,2}x_{2,1}x_{2,2}x_{3,1}x_{3,2})^4/(x_{1,2}x_{2,2}x_{3,2})}
\\
&= \dfrac{(x_{1,1}+x_{1,2})(x_{2,1}+x_{2,2})(x_{3,1}+x_{3,2})}{x_{1,1}x_{2,1}x_{3,1}}.
\end{align*}
\end{ex}

\section{Simplicial effective resistance and spanning trees} \label{sec:SER}

We now describe the electrical theory of simplicial networks~\cite{CCK1,KL1}, which is a high-dimensional generalization of the classical theory in dimension~1, introduced by Kirchhoff \cite{Kir} (see also \cite{Biggs} and \cite[Chap.~II]{BB}).  We will review simplicial effective resistance for simplicial networks and its combinatorial interpretation in terms of high-dimensional spanning trees.

\subsection{Simplicial networks and effective resistance} \label{subsec:SNER}
For a simplicial complex $\Delta$ of dimension~$d$, let $\mathcal{R}_\Delta=\{r_\tau >0\st\tau \in \Delta_d\}$ be a list of positive \emph{resistances} associated with the $d$-dimensional simplices of $\Delta$.  The pair $(\Delta,\mathcal{R}_\Delta)$ is called a \emph{simplicial resistor network}.  Let $\sigma$ be a $(d+1)$-subset of the vertex set of $\Delta$ that satisfies 
\begin{equation}\label{eq:bdcondition}
\partial_d[\sigma] \in \im\partial_{\Delta,d}
\end{equation}
(equivalently, attaching $\sigma$ to $\Delta$ increases the $d$th Betti number by 1).
Condition~\eqref{eq:bdcondition} will be necessary for the definition of effective resistance of~$\sigma$.  Note that when $d=1$, \eqref{eq:bdcondition} says that the endpoints of the edge $\sigma$ belong to the same connected component.

Consider a simplicial network $(\Delta,\mathcal{R}_\Delta)$ in which a current $\alpha$ ($>0$) is preassigned to each oriented face in $\partial_d[{\sigma}]$.
We now attach a simplex ${\bm\sigma}$ having vertex set $\sigma$.  We regard $\bm\sigma$ as a current generator supplying current $i_{\bm\sigma}=\alpha$. The complex $\Delta^{\bm\sigma}$ thus obtained can be regarded as a high-dimensional analogue of a resistor network together with a battery.\footnote{A real battery provides a fixed voltage rather than a fixed current, so this analogy is not physically accurate, but it is still useful conceptually.}  Denoting the voltage of ${\bm \sigma}$ by $v_{\bm \sigma}$, the ensuing state of the electrical network on $\Delta^{\bm\sigma}$ can be described by current and voltage vectors
$I_{\bm\sigma}=(i_\tau)_{\tau \in \Delta^{\bm\sigma}_d}$ and $V_{\bm\sigma}=(v_\tau)_{\tau \in \Delta^{\bm\sigma}_d}$, satisfying the following fundamental network laws:
\begin{align}
I_{\bm\sigma}&\in \ker \partial_{\Delta^{\bm\sigma}, d}
	&& \text{(Kirchhoff's current law (KCL))}, \label{KCL}\\
V_{\bm\sigma}&\in (\ker \partial_{\Delta^{\bm\sigma}, d})^{\perp}
	&& \text{(Kirchhoff's voltage law (KVL))},\label{KVL}\\
v_\tau&=r_\tau i_\tau \text{ for all } \tau \in \Delta_d
	&& \text{(Ohm's law (OL))}.\label{OL}
\end{align}

That is, Kirchhoff's laws say that the current ${I_{\bm\sigma}}$ is a $d$-cycle and that the voltage $V_{\bm\sigma}$ is a $d$-cocycle. The \textit{simplicial effective resistance} of the \emph{set} $\sigma$ \cite[Thm.~3.1]{KL1} is then defined to be
\begin{equation} \label{def:sef}
R_{\sigma}=-\dfrac{v_{\bm\sigma}}{i_{\bm\sigma}}.
\end{equation}

\begin{rem} \label{rem:contains-sigma}
Let us consider the case that $\Delta$ already contains $\sigma$ as a facet (before attaching $\bm\sigma$), so that $\Delta$ is a $\Delta$-complex in the sense of \cite{Hatcher} rather than a simplicial complex.  Then condition~\eqref{eq:bdcondition} is automatically satisfied.  Moreover, if we write $V=V_{\bm\sigma}-v_{\bm\sigma}[\bm\sigma]$ and $I=I_{\bm\sigma}-i_{\bm\sigma}[\bm\sigma]$, then $V_{\bm\sigma}\in (\ker \partial_{\Delta^{\bm\sigma}, d})^{\perp}$ ~\eqref{KVL} if and only if $V\in (\ker \partial_{\Delta, d})^{\perp}$ and $v_{\bm \sigma}=v_{\sigma}$. 
By OL~\eqref{OL}, $v_{\bm \sigma}=v_{\sigma}=i_{\sigma}r_{\sigma}$, so $R_{\sigma}$ can be written as
\begin{equation} \label{def:resef}
R_{\sigma}=-\dfrac{i_{\sigma}r_{\sigma}}{i_{\bm\sigma}}=-\dfrac{i_{\sigma}r_{\sigma}}{\alpha}.
\end{equation}
\end{rem}

\begin{ex}\label{ex:octa_elec}
We give an example of solving a simplicial network problem, that is, determining the current and voltage vectors produced by attaching a current generator to a simplicial resistor network, and computing effective resistance.  Let $\Delta$ be the octahedron of Example~\ref{eq:octa}, with resistances given by $r_{\tau}=1/(x_{1,j_1} x_{2,j_2}x_{3,j_3})$ for each facet $\tau =v_{1,j_1}v_{2,j_2}v_{3,j_3}\in \Delta$.  For convenience, set
\begin{align*}
x_{j_1j_2j_3} &= x_{1,j_1}x_{2,j_2}x_{3,j_3} \text{ for } j_1,j_2,j_3\in\{1,2\},\\
x_{222}^* &= (x_{1,1}+x_{1,2})(x_{2,1}+x_{2,2})(x_{3,1}+x_{3,2}).
\end{align*}
Let us attach a current generator ${\bm\sigma}=v_{1,2}v_{2,2}v_{3,2}$ with current $i_{\bm\sigma}=x_{222}^*$. 
We will write chains in $\mathcal{C}_2(\Delta^\sigma)$ as vectors whose entries correspond to the 2-faces of $\Delta$ in lexicographic order, followed by ${\bm\sigma}$ as the last entry. Let $z_{\sigma}$ be a generator of $\tilde{H}_2(\Delta)$ and let $z_{\bm \sigma}=[\bm{\sigma}]-[\sigma]$, which we can write as vectors 
\[
z_{\sigma}=[-1,\:1,\:1,\:-1,\:1,\:-1,\:-1,\:1,\:0]^{tr} 
\mbox{  and  }
z_{\bm\sigma}=[0,\:0,\:0,\:0,\:0,\:0,\:0,-1,\:1]^{tr}.
\]
The cycle space $\ker \partial_{\Delta^{\bm\sigma}, 2}$ is generated by $z_\sigma$ and $z_{\bm \sigma}$, so the current $I_{\bm\sigma}$ that takes the given value on ${\bm\sigma}$ and that satisfies KCL is
\[I_{\bm\sigma} =
 a\: z_{\sigma}+ x_{222}^* \: z_{\bm \sigma}
=
[-a,\:a,\:a,\:-a,\:a,\:-a,\:-a,\:a-x_{222}^*,\:x_{222}^*]^{tr}
\]
for some $a\in \mathbb{R}$.  Note that $I_{\bm\sigma}$ is unique up to the choice of $a$.
The voltages for faces of $\Delta$ can be found by applying Ohm's law, and the voltage for ${\bm\sigma}$ by KVL, giving
\[V_{\bm\sigma} = \Big[-\dfrac{a}{x_{111}},\:\dfrac{a}{x_{112}},\:\dfrac{a}{x_{121}},\:-\dfrac{a}{x_{122}},\:\dfrac{a}{x_{211}},\:-\dfrac{a}{x_{212}},\:-\dfrac{a}{x_{221}},\:\dfrac{a-x_{222}^*}{x_{222}},\:\dfrac{a-x_{222}^*}{x_{222}}\Big]^{tr}.
\]
Since the voltage $V_{\bm\sigma}$ satisfies KVL, 
\[
0=\left\langle V_{\bm\sigma},z_\sigma\right\rangle
= \dfrac{a \, x_{222}^*}{x_{111}x_{222}}-\dfrac{x_{222}^*}{x_{222}}, \mbox{ so that } {a=x_{111}}.
\]
Consequently, 
\begin{align*}
I_{\bm\sigma} 
&=
[-x_{111},\:x_{111},\:x_{111},\:-x_{111},\:x_{111},\:-x_{111},\:-x_{111},\:x_{111}-x_{222}^*,\:x_{222}^*]^{tr}\\
V_{\bm\sigma} &= \Big[-\dfrac{x_{111}}{x_{111}},\:\dfrac{x_{111}}{x_{112}},\:\dfrac{x_{111}}{x_{121}},\:-\dfrac{x_{111}}{x_{122}},\:\dfrac{x_{111}}{x_{211}},\:-\dfrac{x_{111}}{x_{212}},\:-\dfrac{x_{111}}{x_{221}},\:\dfrac{x_{111}-x_{222}^*}{x_{222}},\:\dfrac{x_{111}-x_{222}^*}{x_{222}}\Big]^{tr}.
\end{align*}
The simplicial effective resistance $R_{\sigma}$ is
\[R_{\sigma}=-\dfrac{v_{\bm\sigma}}{i_{\bm\sigma}}=\dfrac{x_{222}^*-x_{111}}{x_{222}}\dfrac{1}{x_{222}^*}=\dfrac{1}{x_{222}}\Big(\dfrac{x_{222}^*-x_{111}}{x_{222}^*}\Big).
\]
\end{ex}

\subsection{Simplicial effective resistance and spanning trees}
We now view $\Delta^{\bm \sigma}$ as a weighted simplicial complex in which each $d$-face $\tau\in\Delta$ is weighted by its \emph{conductance} $x_{\tau}=1/r_{\tau}$, while the $d$-face ${\bm\sigma}$ in $\Delta^{\bm \sigma}$ has weight $1$. For a $d$-dimensional spanning tree $\Upsilon$ of $\Delta^{\bm \sigma}$, define its weight $x_\Upsilon$ to be 
\[
x_\Upsilon=\prod_{\tau \in \Upsilon_d}x_\tau.
\]
Define $\mathcal{T}_d(\Delta)_{\bm \sigma}=\{\Upsilon \in \mathcal{T}_d(\Delta^{\bm \sigma}) \st{\bm\sigma} \in \Upsilon_d \}$, and 
\[
\hat{k}_d(\Delta)_{\bm \sigma}=
\sum_{\Upsilon\in\mathcal{T}_d(\Delta)_{\bm \sigma}}x_{\Upsilon}\,{\mathbf{t}_{d-1}(\Upsilon)}^2.
\]
The following theorem gives the combinatorial interpretation of $R_{\sigma}$ as a ratio of weighted tree numbers.  The corresponding result for graphs is \cite[Prop.~2.3]{T} and \cite[Thm.~6]{K3}, and the unweighted case is \cite[Prop.~17.1]{Biggs}.

\begin{thm}[{\cite[Thm.~5.1]{KL1} and \cite[Thm.~6]{KL2}}] \label{thm:fortree}
The simplicial effective resistance $R_{\sigma}$ in $(\Delta,\mathcal{R}_\Delta)$ is given by
\begin{equation}\label{eq:SER}
R_{\sigma}=\dfrac{\hat{k}_d(\Delta)_{\bm \sigma}}{\hat{k}_d(\Delta)}.
\end{equation}
\end{thm}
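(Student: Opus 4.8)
The plan is to bridge the two sides of \eqref{eq:SER} by a Dirichlet (Thomson) principle on the network side and the weighted simplicial Matrix--Tree theorem on the enumerative side, using a fixed $(d-1)$-spanning tree of $\Delta$ as a common ``grounding''. Rescale so that $i_{\bm\sigma}=\alpha=1$, and write $W=\mathrm{diag}(x_\tau)_{\tau\in\Delta_d}$ and $R_{\mathrm{mat}}=W^{-1}=\mathrm{diag}(r_\tau)$. Writing a current satisfying KCL as $I_{\bm\sigma}=[\bm\sigma]-f$ with $f\in\mathcal{C}_d(\Delta)$, condition \eqref{KCL} becomes $\partial_{\Delta,d}f=\partial_d[\sigma]$ and \eqref{OL} gives $V_{\bm\sigma}|_\Delta=-R_{\mathrm{mat}}f$; decomposing $\ker\partial_{\Delta^{\bm\sigma},d}=\ker\partial_{\Delta,d}\oplus\langle[\bm\sigma]-f_0\rangle$ for a fixed solution $f_0$ of $\partial_{\Delta,d}f_0=\partial_d[\sigma]$, one checks that \eqref{KVL} forces both $R_{\mathrm{mat}}f\perp\ker\partial_{\Delta,d}$ --- which characterizes $f$ as the unique $R_{\mathrm{mat}}$-shortest chain with $\partial_{\Delta,d}f=\partial_d[\sigma]$ --- and $v_{\bm\sigma}=-\langle R_{\mathrm{mat}}f,f\rangle$. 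Hence
\[
R_\sigma=-\frac{v_{\bm\sigma}}{i_{\bm\sigma}}=\min\bigl\{\langle R_{\mathrm{mat}}g,g\rangle\st g\in\mathcal{C}_d(\Delta),\ \partial_{\Delta,d}g=\partial_d[\sigma]\bigr\}.
\]

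Next I would ground. Fix a $(d-1)$-tree $\Gamma$ of $\Delta$; since $\Delta$ and $\Delta^{\bm\sigma}$ share their $(d-1)$-skeleton and their Betti numbers below degree $d$, $\Gamma$ is also a $(d-1)$-tree of $\Delta^{\bm\sigma}$. Let $\partial_\Gamma$ be the matrix obtained from $\partial_{\Delta,d}$ by deleting the rows indexed by $\Gamma_{d-1}$, let $b$ be $\partial_d[\sigma]$ with the same coordinates deleted, and set $L=\partial_\Gamma W\partial_\Gamma^{\mathsf T}$. A short argument from the theory of simplicial trees shows that, for $g\in\mathcal{C}_d(\Delta)$, one has $\partial_{\Delta,d}g=\partial_d[\sigma]$ if and only if $\partial_\Gamma g=b$: the difference $\partial_{\Delta,d}g-\partial_d[\sigma]$ is a $(d-1)$-cycle supported on $\Gamma_{d-1}$, and $\partial_{\Delta,d-1}$ is injective on chains supported on $\Gamma_{d-1}$ because $b_{d-1}(\Gamma)=0$. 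In particular $\ker\partial_\Gamma=\ker\partial_{\Delta,d}$, and $b\in\im\partial_\Gamma$ by \eqref{eq:bdcondition}. Solving the constrained minimum above by Lagrange multipliers (stationary chain $g^\ast=W\partial_\Gamma^{\mathsf T}\lambda$ with $L\lambda=b$) then yields
\[
R_\sigma=b^{\mathsf T}L^{+}b,
\]
where $L^{+}$ is the inverse of $L$ on $\im L=\im\partial_\Gamma$, which is just $L^{-1}$ when $b_{d-1}(\Delta)=0$.

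On the enumerative side, the weighted simplicial Matrix--Tree theorem applied with the \emph{same} $\Gamma$ produces a single constant $c_\Gamma$, depending only on the shared $(d-1)$-skeleton (essentially a power of $\mathbf{t}_{d-2}(\Gamma)$), with $\hat{k}_d(\Delta)=c_\Gamma\det L$ and $\hat{k}_d(\Delta^{\bm\sigma})=c_\Gamma\det(L+bb^{\mathsf T})$ --- here the reduced $d$th boundary matrix of $\Delta^{\bm\sigma}$ is $[\,\partial_\Gamma\mid b\,]$ and $\bm\sigma$ carries conductance $1$. The matrix-determinant lemma gives $\hat{k}_d(\Delta^{\bm\sigma})=\hat{k}_d(\Delta)\bigl(1+b^{\mathsf T}L^{+}b\bigr)$. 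On the other hand the $d$-trees of $\Delta^{\bm\sigma}$ not containing $\bm\sigma$ are exactly the $d$-trees of $\Delta$ (same $(d-1)$-skeleton and lower Betti numbers), with the same torsion and weight, so $\hat{k}_d(\Delta^{\bm\sigma})=\hat{k}_d(\Delta)+\hat{k}_d(\Delta)_{\bm\sigma}$. Comparing the two expressions gives $\hat{k}_d(\Delta)_{\bm\sigma}/\hat{k}_d(\Delta)=b^{\mathsf T}L^{+}b$, which equals $R_\sigma$ by the previous paragraph.

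The conceptual heart is the passage between the analytic side (energy minimization) and the combinatorial side, carried by the Matrix--Tree theorem, and I expect the main obstacles to be technical rather than conceptual: first, pinning down the weighted simplicial Matrix--Tree theorem in exactly the generality needed (including the torsion factor $\mathbf{t}_{d-2}(\Gamma)$, which fortunately cancels in the ratio); and second, the case $b_{d-1}(\Delta)\neq 0$, in which $L$ is singular, so that one must work consistently on the subspace $\im\partial_\Gamma$ with a reflexive generalized inverse and re-verify the determinant lemma, the Lagrange computation, and the splitting of $\ker\partial_{\Delta^{\bm\sigma},d}$ in that setting. Verifying the grounding equivalence $\partial_\Gamma g=b\Leftrightarrow\partial_{\Delta,d}g=\partial_d[\sigma]$ is also a small but genuine use of the combinatorics of $(d-1)$-spanning trees.
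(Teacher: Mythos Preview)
The paper does not prove Theorem~\ref{thm:fortree}; it is quoted from \cite{KL1,KL2} and used as a black box, so there is no ``paper's own proof'' to compare against. That said, your plan is sound and is essentially the standard route to results of this type: Thomson's principle identifies $R_\sigma$ with the minimum weighted energy $b^{\mathsf T}L^{+}b$ of a unit $\sigma$-flow, and the weighted simplicial Matrix--Tree theorem together with the matrix-determinant lemma identifies the same quantity with the tree ratio. Your derivation of $v_{\bm\sigma}=-\langle R_{\mathrm{mat}}f,f\rangle$ from KCL/KVL/OL is correct, as is the grounding argument (the key point being that a $(d-1)$-tree $\Gamma$ has $b_{d-1}(\Gamma)=0$, so $\partial_{\Delta,d-1}$ is injective on $\Gamma_{d-1}$-supported chains, forcing $\partial_\Gamma g=b\Leftrightarrow\partial_{\Delta,d}g=\partial_d[\sigma]$). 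The two places that genuinely need work are exactly the ones you flag: first, pinning down the weighted simplicial Matrix--Tree theorem so that the torsion constant $c_\Gamma$ provably depends only on the common $(d-1)$-skeleton (it does, and it cancels in the ratio); second, the case $b_{d-1}(\Delta)\neq 0$, where $L$ is singular. There the cleanest fix is to restrict everything to the invariant subspace $\im\partial_\Gamma$: since $W$ is positive definite, $\im L=\im\partial_\Gamma\ni b$, so $L$ is invertible on that subspace, the Lagrange computation goes through verbatim there, and the matrix-determinant lemma becomes the rank-one update for the pseudodeterminant (equivalently, for $\det(L|_{\im\partial_\Gamma})$), which is exactly the form in which the simplicial Matrix--Tree theorem is stated. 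None of these are conceptual obstacles.
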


This combinatorial interpretation for $R_{\sigma}$ will be employed to give formulas for the tree-numbers of color-shifted complexes and shifted complexes. To this end, we will rewrite Equation~\eqref{eq:SER} as follows. Let $\Psi$ be a $d$-dimensional simplicial complex, and let $\sigma$ be a $d$-simplex not in $\Psi$, but whose proper subsets are all faces of~$\Psi$. Let $\Delta=\Psi \cup \{\sigma\}$. Since $\bm \sigma$ has the same vertex set as $\sigma$, the map $\{\Upsilon \in \mathcal{T}_d(\Delta) \st{\sigma} \in \Upsilon_d \} \to \mathcal{T}_d (\Delta)_{\bm \sigma}$ defined by $\Upsilon \mapsto \Upsilon  \setminus {\sigma} \cup {\bm \sigma}$ is a bijection; the weight of $\Upsilon  \setminus {\sigma} \cup {\bm \sigma}$ is the weight of $\Upsilon$ divided by $x_{\sigma}$. Then there is a deletion/contraction-like relation:
\begin{align*}
\hat{k}_d(\Delta) &=
\sum_{\Upsilon\in\mathcal{T}_d(\Delta)\colon\sigma \notin \Upsilon_d}x_{\Upsilon}\,{\mathbf{t}_{d-1}(\Upsilon)}^2
+\sum_{\Upsilon\in\mathcal{T}_d(\Delta)\colon\sigma \in \Upsilon_d}x_{\Upsilon}\,{\mathbf{t}_{d-1}(\Upsilon)}^2
\\
&=
\sum_{\Upsilon\in\mathcal{T}_d(\Psi)}x_{\Upsilon}\,{\mathbf{t}_{d-1}(\Upsilon)}^2+x_{\sigma}\sum_{\Upsilon\in\mathcal{T}_d(\Delta^{\bm \sigma})\colon{\bm \sigma} \in \Upsilon_d}x_{\Upsilon}\,{\mathbf{t}_{d-1}(\Upsilon)}^2
\\
&=
\sum_{\Upsilon\in\mathcal{T}_d(\Psi)}x_{\Upsilon}\,{\mathbf{t}_{d-1}(\Upsilon)}^2+x_{\sigma}\sum_{\Upsilon\in\mathcal{T}_d(\Delta)_{\bm \sigma}}x_{\Upsilon}\,{\mathbf{t}_{d-1}(\Upsilon)}^2
\\
&=
\hat{k}_d{(\Psi)}+x_{\sigma}\,\hat{k}_d(\Delta)_{\bm \sigma}.
\end{align*}
Now Theorem~\ref{thm:fortree} and Equation~\eqref{def:resef} imply that
\[
\dfrac{\hat{k}_d(\Psi)}{\hat{k}_d(\Delta)}=\dfrac{\hat{k}_d(\Delta)-x_{\sigma}\,\hat{k}_d(\Delta)_{\bm \sigma}}{\hat{k}_d(\Delta)}
=1-x_\sigma R_\sigma
=1+\dfrac{i_\sigma}{i_{\bm\sigma}}
\]
where $R_\sigma$ is the simplicial effective resistance of $\sigma$ in $\Delta$.  Equivalently,
\begin{equation}\label{eq:Ratio}
\dfrac{\hat{k}_d(\Delta)}{\hat{k}_d(\Psi)}=\dfrac{i_{{\bm\sigma}}}{i_{{\bm\sigma}}+i_{{\sigma}}}.
\end{equation}

\begin{ex}
Let $\Psi,\Delta,\sigma$ be as in Example~\ref{ex:octa_ratio}.  Recall from Example~\ref{ex:octa_elec} that the simplicial effective resistance $R_\sigma$ in $\Delta$ is given by 
\[
R_{\sigma}=\dfrac{1}{x_{222}}\Big(\dfrac{x_{222}^*-x_{111}}{x_{222}^*}\Big).
\]
Therefore,
\[
\dfrac{\hat{k}_2(\Delta)}{\hat{k}_2(\Psi)}= \dfrac{1}{1-x_{\sigma}R_{\sigma}}=\dfrac{x_{222}^*}{x_{111}}=\dfrac{(x_{1,1}+x_{1,2})(x_{2,1}+x_{2,2})(x_{3,1}+x_{3,2})}{x_{1,1}x_{2,1}x_{3,1}},
\]
which matches the computation in Example~\ref{ex:octa_ratio}.
\end{ex}

\section{Color-shifted complexes}\label{sec:color-shifted}

In this section, we give a formula for weighted tree-numbers of \textit{color-shifted complexes} using the theory of high-dimensional electrical networks.  
The formula for the 1-dimensional case (i.e., for Ferrers graphs) appears in~\cite{EW}.

\begin{defn} \label{defn:CSC}
Fix a positive integer $d$.  For each color $q \in[d+1]$, let $V_q=\{v_{q,j} \st j=1,\dots,n_q\}$ be a set of vertices of color $q$.
A \emph{color-shifted complex} $\Delta$ on $V=V_1\cup\cdots\cup V_{d+1}$ is a simplicial complex on $V$ satisfying the following two conditions:
\begin{enumerate}
\item[(1)] each facet $\sigma \in \Delta$ has exactly one vertex of each color, i.e., $|\sigma \cap V_q|=1$ for each $q$ (in particular, $\Delta$ is pure of dimension~$d$), and
\item[(2)] if $\sigma \in \Delta$ and $v_{q,j} \in \sigma$, then $\sigma \setminus v_{q,j} \cup v_{q,j'} \in \Delta$ for every $j'<j$.
\end{enumerate}
\end{defn}

In light of condition~(1), henceforth, we represent a $d$-face $\tau=\{v_{1,j_1},\dots,v_{d+1,j_{d+1}}\}$ of $\Delta$ by the shorthand $(j_1,\dots,j_{d+1})$.  If we define \textit{componentwise order} on $d$-faces by $(j_1,\dots,j_{d+1})\leq(j'_1,\dots,j'_{d+1})$ iff $j_q\leq j'_q$ for every $q$, then
condition~(2) says that the facets of a color-shifted complex form an order ideal in componentwise order.
A color-shifted complex $\Delta$ is said to be \emph{generated} by a set $S$ of simplices if $\Delta$ is the smallest color-shifted complex containing $S$; in this case we write $\Delta=\shiftgen{S}$.
The unique minimal set of generators is the set of maximal elements of $\Delta$ with respect to componentwise order.
Color-shifted complexes with $d=1$ are precisely the \emph{Ferrers graphs} studied by Ehrenborg and van~Willigenburg \cite{EW}. 

We now describe the $d$th reduced homology group $\tilde{H}_d(\Delta)$ of a color-shifted complex $\Delta$ of dimension $d$.
Let $\Lambda$ be the subcomplex of $\Delta$ whose facets are
\begin{equation} \label{Lambda}
\Lambda_d=\{\tau=(j_1,\dots,j_{d+1}) \in \Delta_d\st j_q=1 \text{ for some } q\in[d+1]\}.
\end{equation}
For each $d$-face $\tau=(j_1,\dots,j_{d+1})\in \Delta_d\sm\Lambda_d$, let $C_\tau$ be the subcomplex of $\Delta$ with $2^{d+1}$ facets, each containing one of the two vertices $v_{q,1},v_{q,j_q}$ for each $q\in[d+1]$.  Observe that $C_\tau$ can be geometrically realized as the boundary of a $(d+1)$-crosspolytope, hence is a simplicial sphere.  There is a corresponding $d$-cycle
\begin{equation} \label{ztau}
z_\tau = \sum_{A\subseteq[d+1]} (-1)^{|A|} [\{v_{a,1}\st a\in A\}\cup\{v_{b,j_b}\st b\in[d+1]\sm A\}]
\end{equation}
in $\tilde{H}_d(\Delta)$.  The set $\{z_{\tau}\st \tau \in \Delta_d\sm\Lambda_d\}$
forms a basis of the $d$th reduced homology group $\tilde{H}_d(\Delta)$ and its rank is equal to $b_d(\Delta)=|\Delta_d|-|\Lambda_d|$ \cite[Thm.~5.7]{BN}.
Furthermore, $\Lambda$ is a spanning tree of $\Delta$.  To see this, first observe that the description~\eqref{Lambda} implies that $\Lambda^{(d-1)}=\Delta^{(d-1)}$.  Therefore, in the long exact sequence in relative homology for the pair $(\Delta,\Lambda)$
\[\cdots\to
H_{d-1}(\Delta,\Lambda;\Qq)\to H_d(\Lambda;\Qq)
\to H_d(\Delta;\Qq)
\xrightarrow{\delta} H_d(\Delta,\Lambda;\Qq)\to\cdots\]
the preceding discussion implies that $H_{d-1}(\Delta,\Lambda;\Qq)=0$ and that $\delta$ is an isomorphism, implying $H_d(\Lambda;\Qq)=0$.  Thus $\Lambda$ satisfies the first and third conditions of~\eqref{conditions-for-tree}.

\subsection{Simplicial effective resistances and the ratio of tree-numbers}

For a color $q \in[d+1]$ and a vertex $j\in[n_q]$, let $x_{q,j}$ be the weight of $v_{q,j}$.  The weight of a $d$-face $\tau=(j_1,\dots,j_{d+1})$ is thus $x_{\tau}=x_{1,j_1}\cdots x_{d+1,j_{d+1}}$.  We will find the explicit currents and voltages on the $d$-faces and obtain an expression for the simplicial effective resistance $R_{\sigma}$ in terms of the $x_{q,j}$. Our proof generalizes that of \cite[Prop.~2.2]{EW} for Ferrers graphs.
For $q \in[d+1]$, we abbreviate
\[
D_{q,j}=\sum_{i=1}^{j}x_{q,i}.
\]

\begin{thm}\label{thm:CSC-ratio}
Let $\Delta$ be a pure color-shifted complex of dimension $d$.  Let $\sigma=(\ell_1,\dots,\ell_{d+1})$ be a facet of $\Delta$ that is maximal in componentwise order, and such that $\ell_q\geq2$ for all $q$.
Then
\[
\dfrac{\hat{k}_d(\Delta)}{\hat{k}_d(\Delta\sm\sigma)}=\prod_{q=1}^{d+1}
\dfrac{D_{q,\ell_q}}
{D_{q,\ell_q-1}}.
\]
\end{thm}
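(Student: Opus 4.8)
The plan is to apply~\eqref{eq:Ratio} with $\Psi=\Delta\sm\sigma$: we attach a current generator $\bm\sigma$ to $\Delta$, solve the network equations \eqref{KCL}--\eqref{OL} explicitly, read off the current $i_\sigma$ through the resistor $\sigma$, and substitute.  For this to be legitimate we first need $\Delta\sm\sigma$ to contain every proper face of $\sigma$ (equivalently, \eqref{eq:bdcondition} must hold); this is exactly where the hypotheses enter.  Since $\sigma$ is maximal in componentwise order and every $\ell_q\ge 2$, for a proper face $\rho\subsetneq\sigma$ omitting color $q$ the facet $\rho\cup v_{q,1}$ lies strictly below $\sigma$ in componentwise order, hence is a facet of $\Delta\sm\sigma$, so $\rho\in(\Delta\sm\sigma)_{d-1}$ and the deletion/contraction setup of Section~\ref{sec:ET} applies.

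The heart of the proof is producing an explicit solution.  Let $\Gamma=\{(j_1,\dots,j_{d+1})\st j_q\le\ell_q\text{ for all }q\}$; this is the order ideal generated by $\sigma$, so it is a subcomplex of $\Delta$, and as an abstract complex it is the complete colorful complex on $\ell_1,\dots,\ell_{d+1}$ vertices, whose $d$-cycle space is spanned by crosspolytope cycles of the form~\eqref{ztau} (together with $z_{\bm\sigma}=[\bm\sigma]-[\sigma]$ once we adjoin $\bm\sigma$).  I would construct the current $I_{\bm\sigma}$ as an explicit rational combination of these crosspolytope cycles plus $i_{\bm\sigma}z_{\bm\sigma}$, with coefficients built from the partial sums $D_{q,\ell_q}$ and $D_{q,\ell_q-1}$, so that it is supported on $\Gamma_d\cup\{\bm\sigma\}$ and carries no current on $\Delta_d\sm\Gamma_d$; dually, I would construct the voltage $V_{\bm\sigma}$ as the coboundary $\delta\psi$ of an explicit potential $(d-1)$-cochain $\psi$ on $\Delta$ (extended to $\bm\sigma$ by $v_{\bm\sigma}=v_\sigma$, as in Remark~\ref{rem:contains-sigma}), whose value on a $(d-1)$-face is a product of $D_{q,\cdot}$'s and $x_{q,\cdot}$'s determined by which of its coordinates have reached or exceeded their thresholds $\ell_q$.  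By construction $\delta\psi$ is a cocycle, so KVL~\eqref{KVL} is automatic, and $I_{\bm\sigma}$ is a cycle, so KCL~\eqref{KCL} holds on $\Gamma^{\bm\sigma}$; what remains is to choose $\psi$ so that Ohm's law~\eqref{OL} links the two, i.e., $x_\tau\,\delta\psi(\tau)$ equals the $\tau$-coefficient of $I_{\bm\sigma}$ for every $\tau\in\Delta_d$.

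This reduces to two kinds of identities.  On $d$-faces $\tau$ outside $\Gamma$, both sides must vanish: maximality of $\sigma$ forces every such $\tau$ to have some coordinate strictly above and some strictly below its threshold, and the definition of $\psi$ makes $\delta\psi(\tau)=0$ for exactly these faces --- this is what makes the solution, and hence the final ratio, independent of the ambient complex $\Delta$ (cf.\ the third condition listed in the Introduction).  On $d$-faces inside $\Gamma$, the identity amounts to KCL at those $(d-1)$-faces of $\Gamma$ that are not facets of $\sigma$, which telescopes via $D_{q,\ell_q}=D_{q,\ell_q-1}+x_{q,\ell_q}$ using the fact that all colorful $d$-sets below $\sigma$ are present in $\Gamma$; while at the $d+1$ facets of $\sigma$ the net current comes out to $\pm i_{\bm\sigma}$, matching $\partial_d(i_{\bm\sigma}[\bm\sigma])$ and pinning down $i_{\bm\sigma}$ as a scalar multiple of $\prod_q D_{q,\ell_q}$.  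I expect the main obstacle to be finding the correct $\psi$ (equivalently, the correct crosspolytope-cycle coefficients) and carrying out the telescoping cleanly through the $2^{d+1}$-fold structure of the crosspolytopes; the $d=1$ instance of this computation is \cite[Prop.~2.2]{EW}, which should guide the general case.

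Once the solution is in hand, Ohm's law gives $i_\sigma=x_\sigma v_\sigma=x_\sigma\,\delta\psi(\sigma)$ as an explicit expression in the $D_{q,\ell_q}$ and $D_{q,\ell_q-1}$, and one finds $i_{\bm\sigma}$ proportional to $\prod_q D_{q,\ell_q}$ and $i_{\bm\sigma}+i_\sigma$ proportional to $\prod_q D_{q,\ell_q-1}$ with the same constant --- the latter being where $D_{q,\ell_q}=D_{q,\ell_q-1}+x_{q,\ell_q}$ is used to collapse the expansion.  Substituting into~\eqref{eq:Ratio} then yields
\[
\frac{\hat k_d(\Delta)}{\hat k_d(\Delta\sm\sigma)}=\frac{i_{\bm\sigma}}{i_{\bm\sigma}+i_\sigma}=\prod_{q=1}^{d+1}\frac{D_{q,\ell_q}}{D_{q,\ell_q-1}}. \qedhere
\]
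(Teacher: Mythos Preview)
Your overall strategy --- solve the network on $\Delta^{\bm\sigma}$ explicitly and apply~\eqref{eq:Ratio} --- is exactly the paper's, and you correctly isolate the crucial structural point: the solution is supported on the sub-complete-colorful-complex $\shiftgen{\sigma}$, which is why the ratio is independent of the ambient $\Delta$.  Where you diverge is in the organization of the verification.  The paper writes down an explicit product formula for the current on each facet $\tau=(j_1,\dots,j_{d+1})\in\shiftgen{\sigma}$, namely $i_\tau=\prod_q U_{q,j_q}$ with $U_{q,j}=-x_{q,j}$ for $j<\ell_q$ and $U_{q,\ell_q}=D_{q,\ell_q-1}$ (plus the generator correction at $\sigma$), defines the voltage from it via Ohm's law, and then checks KCL by a direct coordinatewise sum and KVL by pairing against the crosspolytope basis $\{z_\tau\}$.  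You instead propose to build $I_{\bm\sigma}$ as an explicit combination of the $z_\tau$'s (so KCL is free) and $V_{\bm\sigma}$ as the coboundary of a $(d-1)$-potential $\psi$ (so KVL is free), leaving only Ohm's law to check.

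This dual organization is legitimate in principle --- over $\Rr$ every cocycle is a coboundary, so such a $\psi$ exists --- but it does not actually save work, and your sketch leaves the hard part undone.  First, you never write down either the cycle coefficients or $\psi$; the paper's product formula for $i_\tau$ is not in any obvious way a ``nice'' linear combination of the $z_\tau$'s, and there is no reason to expect $\psi$ to have the monomial product form you describe (already for $d=1$ the potential on vertices is piecewise constant with a single jump at the threshold vertex $\ell_q$, not a product of $D$'s and $x$'s).  Second, your claim that on faces inside $\Gamma$ ``the identity amounts to KCL at those $(d-1)$-faces'' conflates two different checks: Ohm's law is a \emph{per-facet} identity $x_\tau\,\delta\psi(\tau)=i_\tau$, not a ridge sum, and in your setup KCL is already free by construction.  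What genuinely remains after you have explicit $I$ and $\psi$ is a pointwise telescoping verification --- which is precisely the computation the paper carries out, just labeled KCL/KVL rather than OL.  In short, the plan is sound, but the missing ingredient is exactly the explicit current formula the paper supplies, and rerouting through a potential $\psi$ does not let you bypass it.
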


Note that $\Delta$ has such a facet $\sigma$ if and only if $\Delta$ is not a tree.

\begin{proof}
Regard $\Delta$ as a resistor network with resistances $r_\tau=1/x_\tau$.  We attach a current generator~$\bm\sigma$ with vertices $v_{1,\ell_1},v_{2,\ell_2},\dots, v_{d+1,\ell_{d+1}}$, and current
\[i_{\bm\sigma}=\prod_{q=1}^{d+1}D_{q,\ell_q}.\]
For each $q \in[d+1]$ and $j_q \in[\ell_q-1]$, we set
\[U_{q,j_q}=-x_{q,j_q} \qquad\text{and}\qquad U_{q,\ell_q}=D_{q,\ell_q-1}.\]
Now, define a current vector $I_{\bm\sigma}=(i_\tau)_{\tau\in \Delta^{\bm\sigma}_d}$ by
\begin{equation} \label{color-shifted-current}
i_\tau = \begin{cases}
-i_{\bm\sigma} + \prod_{q=1}^{d+1}U_{q,\ell_q} & \text{ if } \tau=\sigma,\\
\rule[-4mm]{0mm}{10mm}
\prod_{q=1}^{d+1}U_{q,j_q} &\text{ if } \tau\in\shiftgen{\sigma}\setminus\{\sigma\},\\
0 &\text{ if } \tau\notin\shiftgen{\sigma}.
\end{cases}
\end{equation}
We regard the weights $x_\tau$ as conductances, so by OL~\eqref{OL} the voltage $V_{\bm\sigma}=(v_\tau)_{\tau\in \Delta^{\bm\sigma}_d}$ is given by
\begin{align*}
v_{\tau}&=i_\tau/x_\tau \text{  for } \tau \in \Delta_d,\\
v_{\bm\sigma} &= v_\sigma = i_\sigma/x_\sigma.
\end{align*}
We will show that the vectors $I_{\bm\sigma}$ and $V_{\bm\sigma}$ solve the simplicial network problem on $\Delta^{\bm\sigma}$, that is, that they respectively satisfy KCL and KVL.

\underline{Claim 1: $I_{\bm \sigma}$ satisfies KCL~\eqref{KCL}.}

 For $\eta \in \Delta_{d-1}$, let $(\partial_{\Delta^{\bm \sigma},d})_{\eta}$ be the row vector of $\partial_{\Delta^{\bm \sigma},d}$ corresponding to $\eta$. Since $\Delta_{d-1}$ is color-shifted, we may write $\eta=(j_1,\dots,\widehat{j_{s}},\dots,j_{d+1})$ for some $s\in[d+1]$, where the hat denotes removal.
 We will show that the quantity
\[(\partial_{\Delta^{\bm \sigma},d})_{\eta}\,I_{\bm \sigma}=
(-1)^{s-1}\left(\sum_{\tau \in \Delta^{\bm \sigma}_d\colon \eta \subseteq \tau}i_{\tau}\right)\]
is zero.  Let $\Psi=\Delta\sm\sigma$.  If $\eta \not\subseteq \sigma$, then
\[
\sum_{\tau \in \Delta^{\bm \sigma}_d\colon \eta \subseteq \tau}i_{\tau}
= \sum_{\tau \in \Psi_d\colon \eta \subseteq \tau}i_{\tau}
= \left(\prod_{q \in[d+1]\setminus s}U_{q,\,j_q}\right)
    \left(-\sum_{j_{s}=1}^{\ell_{s}-1}x_{s,\,j_{s}}+U_{s,\,\ell_{s}}\right)
= 0,
\]
while if $\eta \subseteq \sigma$, then
\begin{align*}
\sum_{\tau \in \Delta^{\bm \sigma}_d\colon \eta \subseteq \tau}i_{\tau}
&
=\sum_{\tau \in \Psi_d\colon \eta \subseteq \tau}i_{\tau}+i_{\sigma} + i_{\bm\sigma}
=\left(\sum_{\tau \in \Psi_d\colon \eta \subseteq \tau}i_{\tau}\right)+\prod_{q=1}^{d+1}U_{q,\,\ell_q} \\
&=
\left(\prod_{q \in[d+1]\setminus s}U_{q,\,\ell_q}\right)
\left(-\sum_{j_{s}=1}^{\ell_{s}-1}x_{s,\,j_{s}} \right)
+\prod_{q=1}^{d+1}U_{i,\,\ell_q}\\
&= 0,
\end{align*}
proving Claim~1.
\medskip

\underline{Claim 2: $V_{\bm \sigma}$ satisfies KVL~\eqref{KVL}.}

Recall that $\{z_\tau\st\tau\in\Delta_d\sm\Lambda_d\}$ (see~\eqref{ztau}) is a basis for $\tilde{H}_d(\Delta)=(\ker\partial_{\Delta,d})^\perp$,
so $\{z_\tau\st\tau\in\Delta_d\sm\Lambda_d\}\cup\{[\bm\sigma]-[\sigma]\}$ is a basis for $(\ker\partial_{\Delta^{\bm\sigma},d})^\perp$.
To prove Claim~2, it suffices to show that $V_{\bm\sigma}$ is orthogonal to each member of this basis.

First, evidently $V_{\bm\sigma}$ is orthogonal to $[\bm\sigma]-[\sigma]$.

Second, for $\tau\in\Delta_d\sm\Lambda_d$ with $\tau \ne \sigma$, write $\tau=(j_1,\dots,j_{d+1})$, using the notational shorthand introduced after Definition~\ref{defn:CSC}.  Note that $1<j_q<\ell_q$ for some $q \in[d+1]$.  For $A\subseteq[d+1]\sm\{q\}$, define $\eta_A=\{v_{a,1}\colon a\in A\}\cup\{v_{b,j_b}\colon b\in[d+1]\sm\{q\}\sm A\}$.  Then
\begin{align*}
\left\langle V_{\bm\sigma},z_\tau\right\rangle
&= \sum_{A\subseteq[d+1]\sm\{q\}} \left\langle V_{\bm\sigma},  [\eta_A\cup\{v_{q,j_q}\}]-[\eta_A\cup\{v_{q,1}\}] \right\rangle\\
&= \sum_{A\subseteq[d+1]\sm\{q\}} \left(\prod_{a\in A}U_{a,j_a} \prod_{b\in[d+1]\sm\{q\}\sm A} U_{b,j_b}\right)\left( \frac{U_{q,j_q}}{x_{\eta_A\cup\{v_{q,j_q}\}}} - \frac{U_{q,1}}{x_{\eta_A\cup\{v_{q,1}\}}} \right) \\
&= \sum_{A\subseteq[d+1]\sm\{q\}} \left(\prod_{a\in A}U_{a,j_a} \prod_{b\in[d+1]\sm\{q\}\sm A} U_{b,j_b}\right)\left( \frac{-x_{q,j_q}}{x_{\eta_A}x_{q,j_q}} + \frac{x_{q,1}}{x_{\eta_A}x_{q,1}} \right) && \text{(since $j_q<\ell_q$)}\\
&= 0.
\end{align*}

Finally,
\begin{align*}
\langle V_{\bm\sigma}, z_\sigma \rangle =
\langle V_{\bm\sigma}, z_{\bm\sigma} \rangle
&=\prod_{q=1}^{d+1}\left(-\dfrac{U_{q,\,1}}{x_{q,\,1}}+\dfrac{U_{q,\,\ell_q}}{x_{q,\,\ell_q}}\right)-\prod_{q=1}^{d+1}{\dfrac{D_{q,\,\ell_q}}{x_{q,\,\ell_q}}} \\
&= \prod_{q=1}^{d+1}\left(\dfrac{-x_{q,\,\ell_q}U_{q,\,1}+x_{q,\,1}U_{q,\,\ell_q}}{x_{q,\,1}x_{q,\,\ell_q}}\right)-\prod_{q=1}^{d+1}{\dfrac{D_{q,\,\ell_q}}{x_{q,\,\ell_q}}} \\
&= \prod_{q=1}^{d+1}\left(\dfrac{\sum_{j_q=1}^{\ell_q}{x_{q,\,j_q}}}{x_{q,\,\ell_q}}\right)-\prod_{q=1}^{d+1}{\dfrac{D_{q,\,\ell_q}}{x_{q,\,\ell_q}}}=0,
\end{align*}
proving Claim~2.
\medskip

To conclude the proof, we use~\eqref{eq:Ratio} to obtain
\[
\dfrac{\hat{k}_d(\Delta)}{\hat{k}_d(\Psi)}=\dfrac{i_{\bm\sigma}}{i_{\bm\sigma}+i_\sigma}=\prod_{q=1}^{d+1}
\dfrac{D_{q,\ell_q}}{D_{q,\ell_q-1}}.\qedhere
\]
\end{proof}

To illustrate the proof of Theorem~\ref{thm:CSC-ratio}, we present some examples. For notational convenience, abbreviate $i_{(j_1,j_2, \dots ,j_{d+1})}$ by $i_{j_1 j_2 \dots j_{d+1}}$.

\begin{ex}
Let $\sigma=v_{1,2}v_{2,2}v_{3,2}$, so that $\Delta=\shiftgen{\sigma}$ is the octahedron of Example~\ref{eq:octa}.  Then
\begin{align*}
i_{\bm\sigma}&=(x_{1,1}+x_{1,2})(x_{2,1}+x_{2,2})(x_{3,1}+x_{3,2}) &&= D_{1,2}D_{2,2}D_{3,2},\\
i_{\sigma}&=x_{1,1}x_{2,1}x_{3,1}-i_{\bm\sigma} &&= D_{1,1}D_{2,1}D_{3,1}-i_{\bm\sigma}.
\end{align*}
Per \eqref{color-shifted-current}, the currents of the other facets are
\begin{align*}
i_{111} &=-x_{1,1}x_{2,1}x_{3,1}, &	i_{121} &= x_{1,1}D_{2,1}x_{3,1}, &	i_{211}&=D_{1,1}x_{2,1}x_{3,1}, &	i_{221} &=-D_{1,1}D_{2,1}x_{3,1}\\
 i_{112} &=x_{1,1}x_{2,1}D_{3,1}, &	i_{122} &=-x_{1,1}D_{2,1}D_{3,1}, &	i_{212} &=-D_{1,1}x_{2,1}D_{3,1},
\end{align*}
which coincide with the currents given in Example~\ref{ex:octa_elec}.  Therefore
\[
\dfrac{\hat{k}_2{(\Delta)}}{\hat{k}_2(\Delta\setminus\sigma)}= \dfrac{D_{1,2}D_{2,2}D_{3,2}}{D_{1,1}D_{2,1}D_{3,1}}=\dfrac{(x_{1,1}+x_{1,2})(x_{2,1}+x_{2,2})(x_{3,1}+x_{3,2})}{x_{1,1}x_{2,1}x_{3,1}}.
\]
\end{ex}

\begin{ex}
Let $\Delta$ be the $2$-dimensional color-shifted complex $\shiftgen{\sigma}$, where $\sigma=v_{1,2}v_{2,2}v_{3,3}$.  Then
\begin{align*}
i_{\bm\sigma}&=(x_{1,1}+x_{1,2})(x_{2,1}+x_{2,2})(x_{3,1}+x_{3,2}+x_{3,3}) &&= D_{1,2}D_{2,2}D_{3,3},\\
i_{\sigma}&=x_{1,1}x_{2,1}(x_{3,1}+x_{3,2})-i_{\bm\sigma} &&= D_{1,1}D_{2,1}D_{3,2}-i_{\bm\sigma}.
\end{align*}
Per \eqref{color-shifted-current}, the currents of the other facets are
\begin{align*}
i_{111} &= -x_{1,1}x_{2,1}x_{3,1}, &	i_{121} &= x_{1,1}D_{2,1}x_{3,1}, &	i_{211} &= D_{1,1}x_{2,1}x_{3,1}, &	i_{221} &= -D_{1,1}D_{2,1}x_{3,1},\\
i_{112} &= -x_{1,1}x_{2,1}x_{3,2}, &	i_{122} &= x_{1,1}D_{2,1}x_{3,2}, &	i_{212} &= D_{1,1}x_{2,1}x_{3,2}, &	i_{222} &= -D_{1,1}D_{2,1}x_{3,2},\\
i_{113} &= x_{1,1}x_{2,1}D_{3,2}, &	i_{123} &= -x_{1,1}D_{2,1}D_{3,2}, &	i_{213} &= -D_{1,1}x_{2,1}D_{3,2}.
\end{align*}
Thus
\[
\dfrac{\hat{k}_2{(\Delta)}}{\hat{k}_2(\Delta\setminus\sigma)}= \dfrac{D_{1,2}D_{2,2}D_{3,3}}{D_{1,1}D_{2,1}D_{3,2}}=\dfrac{(x_{1,1}+x_{1,2})(x_{2,1}+x_{2,2})(x_{3,1}+x_{3,2}+x_{3,3})}{x_{1,1}x_{2,1}(x_{3,1}+x_{3,2})}.
\]
\end{ex}

\begin{ex}
Let $G$ be the Ferrers graph generated by $\sigma=v_{1,\ell_1}v_{2,\ell_2}$. Then
\[
i_{\bm\sigma}=D_{1,\ell_1}D_{2,\ell_2} \quad\text{and}\quad i_{\sigma}=D_{1,\ell_1-1}D_{2,\ell_2-1}-i_{\bm\sigma}.
\]
For $j_1 \in[\ell_1-1]$ and $j_2 \in[\ell_2-1]$, 
\[
i_{j_1 \ell_2}=-x_{1,j_1}D_{2,\ell_2-1}, \quad i_{\ell_1 j_2}=-D_{1,\ell_1-1}x_{2,j_2}, \quad i_{j_1 j_2}=x_{j_1}x_{j_2}, 
\]
and the other currents are zero. Then we have
\[
\dfrac{\hat{k}_1{(G)}}{\hat{k}_1(G\setminus\sigma)}= \dfrac{D_{1,\ell_1}D_{2,\ell_2}}{D_{1,\ell_1-1}D_{2,\ell_2-1}}
\]
which recovers~\cite[Prop.~2.2]{EW}.
\end{ex}

\subsection{Tree enumeration in color-shifted complexes}

We now use simplicial effective resistance to enumerate simplicial spanning trees of color-shifted complexes.  The main result, Theorem~\ref{thm:CSC-enumeration}, was first conjectured by G.~Aalipour and the first author [unpublished].

Let $\Delta$ be a $d$-dimensional color-shifted complex (so there are $d+1$ colors).  Note that every $(d-1)$-face $\rho \in \Delta$ is missing a unique color $m(\rho)$; set $k(\rho) = \max\{j\st \rho \cup \{v_{m(\rho),j}\} \in \Delta\}$.

Let $\Lambda$ be the spanning tree of $\Delta$ with facets $\Lambda_d$ given by~\eqref{Lambda}.
Let $\Gamma$ be the subcomplex of $\Delta$ whose facets are the facets {\em not} in $\Lambda$, i.e.,
\[\Gamma = \{\rho \in \Delta\st v_{q,1} \not\in \rho\ \text{for all}\ q\}.\]

\begin{thm}\label{thm:CSC-enumeration}
\[
\hat{k}_d(\Delta) = \prod_{q,i} x_{q,i}^{e(q,i)} \prod_{\rho \in \Gamma_{d-1}}D_{m(\rho),k(\rho)}
\]
where 
\[
e(q,i) = \#\{\sigma \in \Delta_d\st v_{q,i} \in \sigma\ \text{and}\ v_{r,1} \in \sigma\ \text{for some}\ r \neq q\}.
\]
\end{thm}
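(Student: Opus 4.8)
The plan is to realize $\Delta$ as the end of a chain of color-shifted complexes that starts at the canonical spanning tree $\Lambda$ and adds the facets of $\Gamma$ one at a time, apply Theorem~\ref{thm:CSC-ratio} at each step, and then reindex the resulting telescoping product. Concretely, fix a linear extension $\sigma_1,\dots,\sigma_m$ of componentwise order on $\Gamma_d$ and put $\Delta^k=\Lambda\cup\{\sigma_1,\dots,\sigma_k\}$, so $\Delta^0=\Lambda$ and $\Delta^m=\Delta$. Each $\Delta^k$ is color-shifted: in condition~(2) of Definition~\ref{defn:CSC}, lowering a coordinate of some $\sigma_j$ to $1$ produces a facet of $\Lambda$, while lowering it to a value $\ge2$ produces a facet of $\Gamma_d$ that is strictly below $\sigma_j$ in componentwise order and hence precedes it in the chosen linear extension. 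Moreover $\sigma_k$ is componentwise maximal among the facets of $\Delta^k$ — no facet of $\Lambda$ can dominate it, since those have a coordinate equal to $1$ while every coordinate of $\sigma_k$ is $\ge2$, and no earlier $\sigma_j$ can dominate it by the choice of linear extension — and all coordinates of $\sigma_k$ are $\ge2$. Hence Theorem~\ref{thm:CSC-ratio} applies with $(\Delta,\Delta\sm\sigma)=(\Delta^k,\Delta^{k-1})$, and multiplying these ratios for $k=1,\dots,m$ gives
\[
\hat k_d(\Delta)=\hat k_d(\Lambda)\prod_{\sigma\in\Gamma_d}\ \prod_{q=1}^{d+1}\frac{D_{q,\ell_q(\sigma)}}{D_{q,\ell_q(\sigma)-1}},
\qquad\text{where }\sigma=(\ell_1(\sigma),\dots,\ell_{d+1}(\sigma)).
\]

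Next I would compute the base case $\hat k_d(\Lambda)$. Since $\Lambda$ is a spanning tree of $\Delta$ it is its own unique $d$-tree, so $\hat k_d(\Lambda)=x_\Lambda\,\mathbf t_{d-1}(\Lambda)^2$, and it suffices to show $\mathbf t_{d-1}(\Lambda)=1$; in fact I claim $\Lambda$ is contractible. Write $\Lambda=\bigcup_{q=1}^{d+1}\overline{\operatorname{star}}_\Delta(v_{q,1})$, the subcomplex generated by $\Lambda_d$, where $\overline{\operatorname{star}}_\Delta(v)$ denotes the subcomplex generated by the facets of $\Delta$ through $v$. For $\emptyset\ne A\subseteq[d+1]$ the set $\{v_{q,1}\st q\in A\}$ is a face of $\Delta$ — it lies inside the facet $(1,\dots,1)\in\Delta_d$, which belongs to $\Delta$ by color-shiftedness — so $\overline{\operatorname{star}}_\Delta(\{v_{q,1}\st q\in A\})$ is a nonempty cone, hence contractible. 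Using color-shiftedness one checks that $\bigcap_{q\in A}\overline{\operatorname{star}}_\Delta(v_{q,1})=\overline{\operatorname{star}}_\Delta(\{v_{q,1}\st q\in A\})$: a face lying in each $\overline{\operatorname{star}}_\Delta(v_{q,1})$ with $q\in A$ must have each of its color-$q$ coordinates ($q\in A$) equal to $1$, and then replacing the color-$q$ vertices with $q\in A$ of any facet containing it by the corresponding $v_{q,1}$ — legal since those vertices are absent from the face — produces a facet of $\Delta$ containing both the face and all of $\{v_{q,1}\st q\in A\}$. Thus the nerve lemma applies to the cover $\{\overline{\operatorname{star}}_\Delta(v_{q,1})\}_q$, whose nerve is the full $d$-simplex on $[d+1]$; so $\Lambda$ is contractible, $\mathbf t_{d-1}(\Lambda)=1$, and $\hat k_d(\Lambda)=x_\Lambda=\prod_{q,i}x_{q,i}^{a(q,i)}$ with $a(q,i)=\#\{\tau\in\Lambda_d\st v_{q,i}\in\tau\}$.

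Finally I would reindex. For each color $q$, the map $\sigma\mapsto(\sigma\sm v_{q,\ell_q(\sigma)},\ \ell_q(\sigma))$ is a bijection from $\Gamma_d$ to $\{(\rho,j)\st\rho\in\Gamma_{d-1},\ m(\rho)=q,\ 2\le j\le k(\rho)\}$ — deleting color $q$ from $\sigma\in\Gamma_d$ leaves all coordinates $\ge2$, so $\rho\in\Gamma_{d-1}$, and injectivity and surjectivity are immediate. Since $D_{q,1}=x_{q,1}$, telescoping in $j$ gives
\[
\prod_{\sigma\in\Gamma_d}\frac{D_{q,\ell_q(\sigma)}}{D_{q,\ell_q(\sigma)-1}}=\prod_{\substack{\rho\in\Gamma_{d-1}\\ m(\rho)=q}}\ \prod_{j=2}^{k(\rho)}\frac{D_{q,j}}{D_{q,j-1}}=\prod_{\substack{\rho\in\Gamma_{d-1}\\ m(\rho)=q}}\frac{D_{q,k(\rho)}}{x_{q,1}},
\]
so the double product over $\Gamma_d$ and over $q$ equals $\big(\prod_q x_{q,1}^{-g_q}\big)\prod_{\rho\in\Gamma_{d-1}}D_{m(\rho),k(\rho)}$, where $g_q=\#\{\rho\in\Gamma_{d-1}\st m(\rho)=q\}$. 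To finish, the bijection $\tau\mapsto\tau\sm v_{q,1}$ between the facets of $\Delta$ whose unique color-$1$ vertex is $v_{q,1}$ and the faces $\rho\in\Gamma_{d-1}$ with $m(\rho)=q$ shows $a(q,i)=e(q,i)$ for $i\ge2$ and $a(q,1)-g_q=e(q,1)$; hence $x_\Lambda\prod_q x_{q,1}^{-g_q}=\prod_{q,i}x_{q,i}^{e(q,i)}$, and combining this with the telescoping formula of the first paragraph yields $\hat k_d(\Delta)=\prod_{q,i}x_{q,i}^{e(q,i)}\prod_{\rho\in\Gamma_{d-1}}D_{m(\rho),k(\rho)}$, as claimed.

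No single step is deep, since Theorem~\ref{thm:CSC-ratio} carries the analytic content. The two places that need genuine care are: (i) verifying that a linear extension of componentwise order really does keep every $\Delta^k$ color-shifted with $\sigma_k$ componentwise maximal, so that Theorem~\ref{thm:CSC-ratio} is legitimately applicable at every stage; and (ii) the base case, i.e.\ pinning down $\mathbf t_{d-1}(\Lambda)=1$. I expect (ii) — the nerve-lemma argument above, or equivalently an explicit collapsing order on $\Lambda$ — to be the most delicate piece to write cleanly, while the final reindexing, though fiddly, is routine once one spots the two bijections $\sigma\mapsto(\sigma\sm v_{q,\ell_q(\sigma)},\ell_q(\sigma))$ and $\tau\mapsto\tau\sm v_{q,1}$.
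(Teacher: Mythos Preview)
Your proof is correct and follows essentially the same route as the paper: build $\Delta$ from $\Lambda$ by adding the facets of $\Gamma_d$ one at a time (the paper uses lexicographic order, a particular linear extension of componentwise order), apply Theorem~\ref{thm:CSC-ratio} at each step, and telescope over the ridges of $\Gamma$. Your nerve-lemma argument that $\Lambda$ is contractible (hence $\mathbf{t}_{d-1}(\Lambda)=1$) is a welcome addition --- the paper simply asserts that $\hat{k}_d(\Lambda)$ equals the product of the facet weights without explicitly addressing the torsion factor.
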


\begin{proof}
We construct $\Delta$ recursively, starting with the spanning tree $\Lambda$. Since it is a tree, 
$\hat{k}_d(\Lambda)$ is simply the product of the weights of its facets.  Let
\[
f(q,i) = \#\{\sigma \in \Lambda_d\st v_{q,i} \in \sigma\}.
\]

Then, if $i>1$,
\[
f(q,i) = \#\{\sigma \in \Delta_d\st 
	 v_{q,i} \in\sigma\ \text{and}\ v_{r,1} \in \sigma\ \text{for some}\ r \neq q \}
	= e(q,i),
\]
but for $i=1$,
\begin{align*}
	f(q,1) &= \#\{\sigma \in \Delta_d\st 
		v_{q,1} \in \sigma\ \text{and}\ v_{r,1} \in \sigma\ \text{for some}\ r \neq q \} \\
		&\quad + \#\{\sigma \in \Delta_d\st 
			v_{q,1} \in \sigma\ \text{and}\ v_{r,1} \not\in \sigma\ \text{for any}\ r \neq q \}\\
		&= e(q,1) + \#\{\rho \in \Gamma_{d-1}\st m(\rho) = q \}.
\end{align*}
Therefore, 
\begin{equation}\label{kdLambda}
	\hat{k}_d(\Lambda) 
        = \prod_{\sigma \in \Lambda_d}\prod_{v_{q,i} \in \sigma} x_{q,i}
		= \prod_{q, i} x_{q,i}^{f(q,i)}
		= \prod_{q, i} x_{q,i}^{e(q,i)}\prod_{\rho \in \Gamma_{d-1}} x_{m(\rho),1}.
\end{equation}

Next we turn our attention to the remainder of $\Delta$. 
For each facet $\sigma \in \Gamma$, define $\Delta_{< \sigma}$ to be the complex generated by all of $\Lambda$ and the facets that were already added in before $\sigma$ (the facets lexicographically earlier than $\sigma$).  
Since $\sigma \in \Gamma$, we know $j>1$ for each $v_{q,j} \in \sigma$, so
we can apply Theorem~\ref{thm:CSC-ratio} to compute
\begin{equation}\label{Delta}
	\hat{k}_d(\Delta)
	= \hat{k}_d(\Lambda)\prod_{\sigma \in \Gamma_d}
		\frac{\hat{k}_d(\Delta_{< \sigma} \cup \sigma)}{\hat{k}_d(\Delta_{< \sigma})} 
	= \hat{k}_d(\Lambda)\prod_{\sigma \in \Gamma_d}
		\prod_{v_{q,j} \in \sigma} \frac{D_{q,j}}{D_{q,j-1}}
	= \hat{k}_d(\Lambda)\prod_{q}\prod_{\sigma \in \Gamma_d}\prod_{j\st v_{q,j} \in \sigma}
		 \frac{D_{q,j}}{D_{q,j-1}}.
\end{equation}

Now, for fixed $q$, let $\Gamma_{d-1,q}$ denote the set of ridges (($d-1$)-dimensional faces) of $\Gamma$ that are missing color $q$ (i.e., faces $\rho \in \Gamma_{d-1}$ such that $m(\rho) = q$).
We can then write any $\sigma \in \Gamma_d$ as $\sigma = (\rho,j)$, where $\rho$ is the unique ridge of $\sigma$ in $\Gamma_{d-1,q}$ and $v_{q,j}$ is the unique vertex of color $q$ in $\sigma$.
Then, for fixed $q$,
\begin{equation}\label{D-ratio}
	\prod_{\sigma \in \Gamma_d}\prod_{j\st v_{q,j} \in \sigma}
		 \frac{D_{q,j}}{D_{q,j-1}}
	= \prod_{\rho \in \Gamma_{d-1, q}}\prod_{(\rho, j)\in\Gamma} 
		\frac{D_{q,j}}{D_{q,j-1}}
	= \prod_{\rho \in \Gamma_{d-1, q}}\prod_{j=2}^{k(\rho)}
		\frac{D_{q,j}}{D_{q,j-1}}
	= \prod_{\rho \in \Gamma_{d-1, q}}\frac{D_{q,k(\rho)}}{D_{q,1}}.
\end{equation}
Combining Equations~\eqref{kdLambda},~\eqref{Delta}, and~\eqref{D-ratio},
\begin{align*}
	\hat{k}_d(\Delta)
	&= \hat{k}_d(\Lambda)\prod_{q}\prod_{\sigma \in \Gamma_d}\prod_{j\st v_{q,j} \in \sigma}
		\frac{D_{q,j}}{D_{q,j-1}}
	= \hat{k}_d(\Lambda)\prod_{q}\prod_{\rho \in \Gamma_{d-1, q}}\frac{D_{q,k(\rho)}}{D_{q,1}}\\
	&= \prod_{q, i} x_{q,i}^{e(q,i)}\prod_{\rho \in \Gamma_{d-1}} x_{m(\rho),1}
		\prod_{\rho \in \Gamma_{d-1}}\frac{D_{m(\rho),k(\rho)}}{D_{m(\rho),1}}
	= \prod_{q, i} x_{q,i}^{e(q,i)}
		\prod_{\rho \in \Gamma_{d-1}}D_{m(\rho),k(\rho)}\\
\end{align*}
since $D_{q,1} = x_{q,1}$, for any color $q$.
\end{proof}

\begin{ex} \label{ex:ccc}
Let $\Delta$ be the complete colorful complex on $V=V_1\cup \cdots \cup  V_{d+1}$, i.e., the simplicial join $V_1*\cdots*V_{d+1}$.  Then $\Delta$ is certainly color-shifted, and $\Gamma=V'_1*\cdots*V'_{d+1}$, where $V'_q=V_q\sm\{v_{q,1}\}$.  Moreover, setting $n_q=\#V_q$ for each $q\in[d+1]$,
\[e(q,i) = \prod_{r\neq q} n_r - \prod_{r\neq q}(n_r-1) = \sum_{\substack{J\subseteq[d+1]\sm\{q\}\\ |J|\leq d-1}} (-1)^{d-1-|J|} \prod_{r\in J} n_{r}\]
(a quantity independent of $i$, and denoted by $E_{d,q}$ in \cite{ADKLM}).
Therefore, the formula of Theorem~\ref{thm:CSC-enumeration} becomes
\begin{align*}
\hat{k}_d(\Delta)
&= \prod_{q,i} x_{q,i}^{e(q,i)} \prod_{q=1}^{d+1} \prod_{\rho\in V'_1\times\cdots\times\widehat{V'_q}\times\cdots\times V'_{d+1}} D_{q,n_q}\\
&= \prod_{q=1}^{d+1} \left[ \left(\prod_{i=1}^{n_i} x_{q,i}\right)^{e(q,i)}
(x_{q,1}+\cdots+x_{q,n_q})^{\prod_{r\neq q}(n_{r}-1)} \right]
\end{align*}
which is a special case of \cite[Thm.~1.2]{ADKLM} (setting $r=d+1$ and $k=d$ therein), and further specializing $x_{q,i}=1$ for all $q,i$ recovers the unweighted tree enumerator
\[ \prod_{q=1}^{d+1} n_q^{\prod_{r\neq q}(n_{r}-1)} \]
which is the $k=r-1$ case of Adin's formula~\cite[Thm.~1.5]{A}.
\end{ex}

\begin{ex}
Consider the color-shifted complex $\Delta=\shiftgen{\Green{2}\Red{3}\Blue{5}, \Green{3}\Red{2}\Blue{4}, \Green{3}\Red{3}\Blue{3}}$.  Since there are three colors, we can represent it as a plane partition (the 3-dimensional analogue of a Ferrers diagram), as shown in Figure~\ref{fig:Delta}, 
where the first (green) coordinate points to the left, the second (red) coordinate points to the right, and the third (blue) coordinate points up.  
Each block represents a facet of $\Delta$; the outside corners are the color-shifted generators.  The subcomplexes $\Lambda$ and $\Gamma$ correspond to the ``boundary'' and ``interior'' of the plane partition; see Figures~\ref{fig:Lambda} and~\ref{fig:Gamma}.
 
\begin{figure}[ht]
     \centering
     \begin{subfigure}[b]{0.3\textwidth}
         \centering
         	\begin{tikzpicture}[scale=.8]
			\planepartition{{5,5,5},{5,5,5},{4,4,3}}
			\foreach \pct/\xa/\ya/\xb/\yb in {.55/2/5/1/3, 0/-2/-3/-1.2/1, .7/2/-3/.3/-.3}
				{
				\draw[black](\xa,\ya) -- ({\xa*\pct+\xb*(1-\pct)}, {\ya*\pct+\yb*(1-\pct)});
				\draw[white] ({\xa*\pct+\xb*(1-\pct)}, {\ya*\pct+\yb*(1-\pct)}) -- (\xb,\yb);
				}
			\node[fill=white] at (2,5) {\Green{2}\Red{3}\Blue{5}};
			\node[fill=white] at (-2,-3) {\Green{3}\Red{2}\Blue{4}};
			\node[fill=white] at (2,-3) {\Green{3}\Red{3}\Blue{3}};
			\end{tikzpicture}
          \caption{$\Delta$}
         \label{fig:Delta}
     \end{subfigure}
     \hfill
     \begin{subfigure}[b]{0.3\textwidth}
         \centering
         	\begin{tikzpicture}[scale=.8]
			\planepartition{{5,5,5},{5,1,1},{4,1,1}}
			\end{tikzpicture}
		 \caption{$\Lambda$}
         \label{fig:Lambda}
     \end{subfigure}
     \hfill
     \begin{subfigure}[b]{0.3\textwidth}
         \centering
         	\begin{tikzpicture}[scale=.8]
			\planepartition{{4,4},{3,2}}
			\end{tikzpicture}\\ [5ex]
         \caption{$\Gamma$}
         \label{fig:Gamma}
     \end{subfigure}
        \caption{The color-shifted complex $\Delta$ generated by $\Green{2}\Red{3}\Blue{5}, \Green{3}\Red{2}\Blue{4}, \Green{3}\Red{3}\Blue{3}$.}
        \label{fig:threeD}
\end{figure}

The contribution from $\Lambda$ depends on computing $f$ and $e$.  For instance, consider $f(\Blue{3},1),\ldots, f(\Blue{3},5)$ and $e(\Blue{3},1), \ldots, e(\Blue{3},5)$.  The values for $f(\Blue{3},i)$ are just given by the number of facets in the diagram of $\Lambda$ whose  height (coordinate in the blue direction) is $i$. So $f(\Blue{3},1) = 9$ because there are 9 facets in $\Lambda$ whose height is 1.  But 4 of those facets (the ones that are visible in the diagram of $\Lambda$) do not contribute to $e$, because in those facets, blue is the only color whose coordinate is 1, so $e(\Blue{3},1) = 9 - 4 = 5$.  (Those 4 facets will be exactly the ones that are cancelled out by the contribution from $\Gamma$.)  Next, $f(\Blue{3}, i) = e(\Blue{3}, i) = 5$ for $i = 2, 3, 4$, because in each case there are 5 facets whose height is $i$, but $f(\Blue{3}, 5) = e(\Blue{3}, 5) = 4$, because there are only 4 facets in $\Lambda$ whose height is 5.  

Similarly, measuring distance in the green direction, $f(\Green{1}, 1) = 15$ and $e(\Green{1}, 1) = f(\Green{1}, 2) = e(\Green{1}, 2) = 7,\ f(\Green{1}, 3) = e(\Green{1}, 3) = 6$.
Measuring distance in the red direction, $f(\Red{2}, 1) = 14$ and $e(\Red{2}, 1) = f(\Red{2}, 2) = e(\Red{2}, 2) = f(\Red{2}, 3) = e(\Red{2}, 3) = 7$.  
Therefore
\begin{align*}
\hat{k}_d(\Lambda) 
	&= 	(x_{\Green{1},1}^{15} x_{\Green{1},2}^{7} x_{\Green{1},3}^{6})
		(x_{\Red{2},1}^{14} x_{\Red{2},2}^{7} x_{\Red{2},3}^{7})
		(x_{\Blue{3},1}^9 x_{\Blue{3},2}^5 x_{\Blue{3},3}^5 x_{\Blue{3},4}^5 x_{\Blue{3},5}^4)\\
	&= 	(x_{\Green{1},1}^{7} x_{\Green{1},2}^{7} x_{\Green{1},3}^{6})
		(x_{\Red{2},1}^{7} x_{\Red{2},2}^{7} x_{\Red{2},3}^{7})
		(x_{\Blue{3},1}^5 x_{\Blue{3},2}^5 x_{\Blue{3},3}^5 x_{\Blue{3},4}^5 x_{\Blue{3},5}^4)
		(x_{\Green{1},1}^8 x_{\Red{2},1}^7 x_{\Blue{3},1}^4),
\end{align*}
matching Equation~\eqref{kdLambda}.

Next we consider the contribution from $\Gamma$.  The contribution from the face $\Green{2}\Red{3}\Blue{5}$ (so the green coordinate is 2, the red coordinate is 3, the blue coordinate is 5), for instance, is 
\[
	\frac{D_{\Green{1},2}}{D_{\Green{1},1}} \frac{D_{\Red{2},3}}{D_{\Red{2},2}} \frac{D_{\Blue{3},5}}{D_{\Blue{3},4}}
\]
To illustrate Equation~\eqref{D-ratio}, let us first focus just on blue.  There are 4 ridges $\rho$ in $\Gamma$ whose missing color $m(\rho)$ is blue, corresponding to the 4 visible blue faces at the bottom of the diagram of Figure~\ref{fig:Lambda}.  If we sort the blue contributions from all facets in $\Gamma$ by which of these 4 ridges they ``sit on top of'', we get
\begin{align*}
	\frac{D_{\Blue{3},5}}{D_{\Blue{3},4}} \frac{D_{\Blue{3},4}}{D_{\Blue{3},3}} \frac{D_{\Blue{3},3}}{D_{\Blue{3},2}} \frac{D_{\Blue{3},2}}{D_{\Blue{3},1}} &= \frac{D_{\Blue{3},5}}{D_{\Blue{3},1}}; \\
	\frac{D_{\Blue{3},5}}{D_{\Blue{3},4}} \frac{D_{\Blue{3},4}}{D_{\Blue{3},3}} \frac{D_{\Blue{3},3}}{D_{\Blue{3},2}} \frac{D_{\Blue{3},2}}{D_{\Blue{3},1}} &= \frac{D_{\Blue{3},5}}{D_{\Blue{3},1}}; \\
	\frac{D_{\Blue{3},4}}{D_{\Blue{3},3}} \frac{D_{\Blue{3},3}}{D_{\Blue{3},2}} \frac{D_{\Blue{3},2}}{D_{\Blue{3},1}} &= \frac{D_{\Blue{3},4}}{D_{\Blue{3},1}}; \\
	\frac{D_{\Blue{3},3}}{D_{\Blue{3},2}} \frac{D_{\Blue{3},2}}{D_{\Blue{3},1}} &= \frac{D_{\Blue{3},3}}{D_{\Blue{3},1}}
\end{align*}
for 
ridges $\Green{2}\Red{2}, \Green{2}\Red{3}, \Green{3}\Red{2}, \Green{3}\Red{3}$, 
respectively, so the total blue contribution is  
\[
	\frac{D_{\Blue{3},5}^2 D_{\Blue{3},4}D_{\Blue{3},3}}{D_{\Blue{3},1}^4}.
\]
This exemplifies Equation~\eqref{D-ratio}, since $k(\Green{2}\Red{2}) = k(\Green{2}\Red{3}) = 5$, $k(\Green{3}\Red{2}) = 4$, and $k(\Green{3}\Red{3}) = 3$, so there are two ridges in $\Gamma$ whose maximum blue coordinate is $5$, one ridge whose maximum blue coordinate is $4$, and one ridge whose maximum blue coordinate is $3$.  See Figure~\ref{fig:blueGamma}.  

\begin{figure}
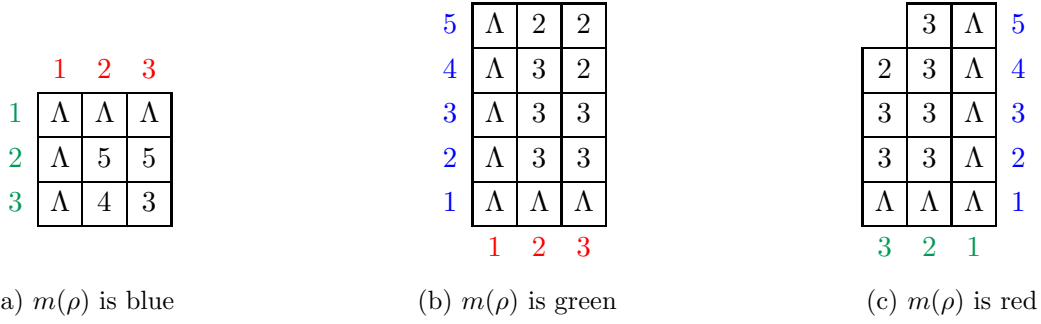

     \centering
     \begin{subfigure}[b]{0.3\textwidth}
         \centering
         	\begin{ytableau}
			\none    & \none[\Red{1}] & \none[\Red{2}] & \none[\Red{3}] \\
			\none[\Green{1}] & \Lambda & \Lambda  & \Lambda  \\
			\none[\Green{2}] & \Lambda & 5 & 5  \\
			\none[\Green{3}] & \Lambda & 4 & 3\\
			\none[ ] 
			\end{ytableau}
         \caption{$m(\rho)$ is blue}
         \label{fig:blueGamma}
     \end{subfigure}
     \hfill
     \begin{subfigure}[b]{0.3\textwidth}
         \centering
			\begin{ytableau}
			\none[\Blue{5}] & \Lambda & 2 & 2  \\
			\none[\Blue{4}] & \Lambda & 3 & 2  \\
			\none[\Blue{3}] & \Lambda & 3 & 3  \\
			\none[\Blue{2}] & \Lambda & 3 & 3  \\
			\none[\Blue{1}] & \Lambda & \Lambda & \Lambda \\
			\none & \none[\Red{1}] & \none[\Red{2}] & \none[\Red{3}] 
			\end{ytableau}
		 \caption{$m(\rho)$ is green}
         \label{fig:greenGamma}
     \end{subfigure}
     \hfill
     \begin{subfigure}[b]{0.3\textwidth}
         \centering
			\begin{ytableau}
			\none & 3 & \Lambda & \none[\Blue{5}]\\
			2 & 3  & \Lambda & \none[\Blue{4}] \\
			3 & 3  & \Lambda & \none[\Blue{3}] \\
			3 & 3  & \Lambda & \none[\Blue{2}] \\
			\Lambda & \Lambda & \Lambda & \none[\Blue{1}] \\
			\none[\Green{3}] & \none[\Green{2}] & \none[\Green{1}] & \none
			\end{ytableau}
         \caption{$m(\rho)$ is red}
         \label{fig:redGamma}
     \end{subfigure}
        \caption{Each ridge $\rho$ of $\Delta$, sorted by its missing color $m(\rho)$.  A ridge $\rho$ with incident facet(s) in $\Gamma$ is labeled with $k(\rho)$, the maximum value of that missing color that can be added to $\rho$.  A ridge whose incident facets are all in $\Lambda$ is labeled with $\Lambda$.}
        \label{fig:GammaRidges}
\end{figure}

Similarly, the green contribution is
\[
	\frac{D_{\Green{1},3}^5 D_{\Green{1},2}^3}{D_{\Green{1},1}^8}
\]
and the red contribution is 
\[
	\frac{D_{\Red{2},3}^6 D_{\Red{2},2}}{D_{\Red{2},1}^7}.
\]
See Figures~\ref{fig:greenGamma} and~\ref{fig:redGamma}, respectively.

Therefore,  
\begin{align*}
	\hat{k}_d(\Delta)
	&= x_{\Green{1},1}^{7} x_{\Green{1},2}^{7} x_{\Green{1},3}^{6}\cdot
		x_{\Red{2},1}^{7} x_{\Red{2},2}^{7} x_{\Red{2},3}^{7}\cdot
		x_{\Blue{3},1}^5 x_{\Blue{3},2}^5 x_{\Blue{3},3}^5 x_{\Blue{3},4}^5 x_{\Blue{3},5}^4\cdot
		x_{\Green{1},1}^8 x_{\Red{2},1}^7 x_{\Blue{3},1}^4\cdot
		\frac{D_{\Green{1},3}^5 D_{\Green{1},2}^3}{D_{\Green{1},1}^8}
		\frac{D_{\Red{2},3}^6 D_{\Red{2},2}}{D_{\Red{2},1}^7}
		\frac{D_{\Blue{3},5}^2 D_{\Blue{3},4} D_{\Blue{3},3}}{D_{\Blue{3},1}^4}\\
	&= x_{\Green{1},1}^{7} x_{\Green{1},2}^{7} x_{\Green{1},3}^{6}\cdot
		x_{\Red{2},1}^{7} x_{\Red{2},2}^{7} x_{\Red{2},3}^{7}\cdot
		x_{\Blue{3},1}^5 x_{\Blue{3},2}^5 x_{\Blue{3},3}^5 x_{\Blue{3},4}^5 x_{\Blue{3},5}^4\cdot
		D_{\Green{1},3}^5 D_{\Green{1},2}^3
		D_{\Red{2},3}^6 D_{\Red{2},2}
		D_{\Blue{3},5}^2 D_{\Blue{3},4} D_{\Blue{3},3}.
\end{align*}
\end{ex}

\section{Shifted complexes}\label{sec:shifted}
In this section we use high-dimensional electrical networks to give a new derivation of the weighted tree-numbers of shifted complexes, previously calculated in \cite{DKM1}.  The formula for the 1-dimensional case (i.e., for threshold graphs) appears in~\cite{MR,RW}. 

\begin{defn}
A (pure) \emph{shifted} complex $\Delta$ on a totally ordered vertex set $V$ (typically an interval) is a simplicial complex satisfying the following condition:
\begin{center}
If $\tau \in \Delta$ and $q \in \tau$, then $\tau \setminus q \cup r \in \Delta$ for every $r<q$.
\end{center}
Equivalently, define the \textbf{Gale order} on $k$-sets of positive integers by
$\{a_1<\cdots<a_k\} \leq \{b_1<\cdots<b_k\}$ if $a_i \leq b_i$ for all $1 \leq i \leq k$.
Then a complex is shifted if its facets form an order ideal in Gale order.
\end{defn}

Let $\Delta$ be a shifted complex of dimension $d$.  Define subcomplexes
\[
\Gamma = \{\tau \in \Delta\st 1 \not\in \tau\}, \qquad \Lambda = \{\rho \in \Gamma\st \rho \dju \{1\} \in \Delta\}.
\]
Note that both $\Gamma$ and $\Lambda$ are shifted.  In general $\dim\Gamma=d$ (unless $\Delta$ is a cone over vertex~1), while $\dim\Lambda=d-1$. 
The complex $1 \ast \Lambda = \{\sigma \in \Delta\st 1 \in \sigma\}$ is a spanning tree of $\Delta$.  
Moreover, each $d$-face $\tau=\{j_1,\dots,j_{d+1}\} \in \Gamma_d$ gives rise to a $d$-cycle
\begin{equation} \label{shifted-cycle}
z_{\tau}=\partial_d[1,j_1,\dots,j_{d+1}] = [\tau]+\sum_{s=1}^{d+1} (-1)^s[\tau\sm j_s\cup 1]
\in\tilde{H}_d(\Delta)
\end{equation}
and the set $\{z_\tau\st \tau \in \Gamma_d\}$ forms a basis of $\tilde{H}_d(\Delta)$; in particular $b_d(\Delta)=|\Gamma_d|$ \cite[Thm.~4.3]{BK}.  

\subsection{Simplicial effective resistances and the ratio of tree-numbers}
We shall find \emph{explicit} expressions for a current vector and a voltage vector, as in the proof of Theorem~\ref{thm:CSC-ratio}, and obtain the ratio of tree-numbers of shifted complexes. The proof of the following theorem requires more careful analysis concerning signs than that of Theorem~\ref{thm:CSC-ratio}. 

For $q \in[n]$, let $x_{q}$ be the weight of the vertex $q$, and define the weight of a $d$-face $\tau=j_1\dots j_{d+1}$ as $x_{\tau}=x_{j_1}\cdots x_{j_{d+1}}$. For $q \in[n]$, write
\[
D_{q}=\sum_{r=1}^{q}x_{r}.
\]
Also, a shifted complex $\Delta$ is said to be \emph{generated} by a set $S$ of simplices if $\Delta$ is the smallest shifted complex containing $S$.  In this case we write $\Delta=\shiftgen{S}$.  (In this section, the notation will always refer to shifted rather than color-shifted complexes.)

\begin{thm}\label{thm:SC-ratio}
Let $\Delta$ be a pure shifted complex of dimension $d$.  Let $\sigma=\{\ell_1<\cdots<\ell_{d+1}\}$ be a facet of $\Delta$ that is maximal in Gale order, and such that $\ell_1\geq2$.  Then
\[
\dfrac{\hat{k}_d(\Delta)}{\hat{k}_d{(\Delta\sm\sigma)}}=\prod_{q=1}^{d+1}\dfrac{D_{\ell_q}}{D_{\ell_q-1}}.
\]
\end{thm}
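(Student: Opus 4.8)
The plan is to mimic the proof of Theorem~\ref{thm:CSC-ratio}: regard $\Delta$ as a resistor network with conductances $x_\tau=1/r_\tau$, attach a current generator $\bm\sigma$ with vertices $\sigma=\{\ell_1<\cdots<\ell_{d+1}\}$ carrying current $i_{\bm\sigma}=\prod_{q=1}^{d+1}D_{\ell_q}$, and then write down \emph{explicit} current and voltage vectors supported on $\shiftgen{\sigma}$, the (shifted) subcomplex generated by $\sigma$. By Equation~\eqref{eq:Ratio}, once we verify KCL for $I_{\bm\sigma}$ and KVL for $V_{\bm\sigma}$, the ratio $\hat k_d(\Delta)/\hat k_d(\Delta\sm\sigma)=i_{\bm\sigma}/(i_{\bm\sigma}+i_\sigma)$ will collapse to $\prod_q D_{\ell_q}/D_{\ell_q-1}$ exactly as in the color-shifted case, provided we arrange $i_\sigma = -i_{\bm\sigma}+\prod_q D_{\ell_q-1}$ (so that $i_{\bm\sigma}+i_\sigma=\prod_q D_{\ell_q-1}$).

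The key design choice is the ansatz for the currents. On a $d$-face $\tau=\{j_1<\cdots<j_{d+1}\}\in\shiftgen{\sigma}$ (so $j_q\le\ell_q$ for all $q$), I expect to set $i_\tau = \pm\prod_{q=1}^{d+1}U_{j_q}^{(q)}$ for scalars $U$ defined slot-by-slot, with $U^{(q)}_{\ell_q}=D_{\ell_q-1}$ and $U^{(q)}_{j}=-x_j$ for $j<\ell_q$, mirroring the color-shifted definition; the genuinely new feature is a sign $\varepsilon(\tau)$ depending on the combinatorial position of $\tau$ relative to $\sigma$ (roughly, a sign recording how many ``slots'' have dropped below $\ell_q$, or equivalently the sign of the permutation sorting the multiset $\{j_1,\dots,j_{d+1}\}$ — since unlike the color-shifted setting the vertices are not pre-partitioned by color, replacing $\ell_q$ by a smaller value can collide with another coordinate and reorder the set). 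For the $(d-1)$-faces $\eta\in\Delta_{d-1}$, Claim~1 amounts to checking $\sum_{\tau\supseteq\eta}i_\tau=0$; if $\eta\not\subseteq\sigma$ this is the telescoping identity $-\sum_{j<\ell_s}x_j + U^{(s)}_{\ell_s}=0$ in the free slot $s$, and if $\eta\subseteq\sigma$ one also picks up the $i_\sigma+i_{\bm\sigma}$ term, which by design supplies the missing $\prod U^{(q)}_{\ell_q}$. Claim~2 (KVL) is checked against the homology basis $\{z_\tau : \tau\in\Gamma_d\}$ from~\eqref{shifted-cycle} together with $[\bm\sigma]-[\sigma]$: for $z_\tau$ with $\tau\ne\sigma$ the pairing splits over which vertex got replaced by~$1$ and vanishes termwise via $U^{(q)}_{j}/x_j = -1$ for $j<\ell_q$ exactly as in the color-shifted computation, while the $z_\sigma$ (equivalently $z_{\bm\sigma}$) pairing telescopes to $\prod_q(D_{\ell_q}/x_{\ell_q}) - \prod_q(D_{\ell_q}/x_{\ell_q})=0$ after Ohm's law $v_\tau=i_\tau/x_\tau$.

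The main obstacle is bookkeeping the signs in~\eqref{shifted-cycle} and in the boundary rows: in the shifted setting the cycle $z_\tau=\partial_d[1,j_1,\dots,j_{d+1}]$ is not a crosspolytope boundary but a single simplex boundary, so the terms $[\tau\sm j_s\cup 1]$ come with alternating signs $(-1)^s$ that must be matched against the sign $\varepsilon(\cdot)$ built into $I_{\bm\sigma}$ and against the reordering that occurs when $1$ is inserted and $j_s$ deleted. Getting $\varepsilon$ right so that every local cancellation in both Claims holds with the correct sign — and simultaneously so that $i_\sigma=-i_{\bm\sigma}+\prod_q U^{(q)}_{\ell_q}$ yields precisely $\prod_q D_{\ell_q-1}$ after observing $-x_{\ell_q}U^{(q)}_1 + x_1 U^{(q)}_{\ell_q}$-type identities — is where the ``more careful analysis concerning signs'' promised in the statement lives; I would handle it by fixing, once and for all, a reference total order on faces of $\shiftgen{\sigma}$ and defining $\varepsilon(\tau)$ as the sign of the sorting permutation of the tuple $(j_1,\dots,j_{d+1})$, then verifying the two claims by a uniform sign computation rather than case analysis. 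Once the signs are pinned down, the arithmetic is the same telescoping product as in Theorem~\ref{thm:CSC-ratio}, and the conclusion follows from~\eqref{eq:Ratio}.
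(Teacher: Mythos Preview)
Your overall strategy matches the paper's: attach a generator $\bm\sigma$ with $i_{\bm\sigma}=\prod_q D_{\ell_q}$, write down an explicit current supported on $\shiftgen{\sigma}$, verify KCL and KVL, and read off the ratio from~\eqref{eq:Ratio}. But two of the specific claims in your ansatz are wrong, and they are not repairable by adjusting only the sign $\varepsilon(\tau)$.

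First, the two-case definition $U^{(q)}_{\ell_q}=D_{\ell_q-1}$, $U^{(q)}_j=-x_j$ for $j<\ell_q$ does not produce a cycle. Take $d=1$, $\sigma=\{2,3\}$. Your ansatz gives $|i_{12}|=x_1x_2$ and $|i_{13}|=x_1D_2$; at vertex~$1$ KCL forces $i_{12}+i_{13}=0$, which is impossible for generic weights regardless of how you choose signs. The point you are missing is that in the shifted (as opposed to color-shifted) setting a vertex of $\tau$ can coincide with a \emph{different} vertex of $\sigma$: in $\tau=\{1,2\}$ the second-smallest element is $2=\ell_1$, not some value strictly below $\ell_2$. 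The paper's current therefore uses a four-case definition of $U^\tau_q$ depending on whether $j_q<\ell_{q-1}$ (value $0$), $j_q=\ell_{q-1}$ (value $D_{\ell_{q-1}}$), $\ell_{q-1}<j_q<\ell_q$ (value $-x_{j_q}$), or $j_q=\ell_q$ (value $D_{\ell_q-1}$), together with a sign $(-1)^{\inv(\pi(\tau))}$ where $\pi(\tau)$ records \emph{which} $\ell_r$ each $j_q\in\sigma\cap\tau$ equals. The sign is not the sign of a sorting permutation of $(j_1,\dots,j_{d+1})$ (that tuple is already sorted), and the two ``collision'' cases $j_q\le\ell_{q-1}$ are precisely what makes Case~1b of KCL (where $\eta\not\subset\sigma$) telescope.

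Second, your KVL argument for $z_\tau$ with $\tau\neq\sigma$ is not correct as stated. In the color-shifted proof the crosspolytope cycle $z_\tau$ pairs faces that differ in a single color, and the pairing vanishes \emph{termwise} via $U_{q,j}/x_{q,j}=-1$. Here $z_\tau=\partial_d[1\cup\tau]$ is a simplex boundary with only $d+2$ terms $[\tau]$ and $[\tau\setminus j_s\cup 1]$, and there is no such pairwise cancellation. What actually happens (in the paper's Case~2b) is that most of the $v_{\tau\setminus j_s\cup 1}$ vanish because a $U$-factor is zero (again the case $j_q<\ell_{q-1}$), and the surviving terms for $s\le s_0+1$ (where $s_0$ is the largest index with $j_r=\ell_r$ for all $r\le s_0$) telescope against $v_\tau$. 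So both claims hinge on the extra cases in the definition of $U^\tau_q$ that your ansatz omits.
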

Note that $\Delta$ has such a facet $\sigma$ if and only if $\Delta$ is not a tree.

\begin{proof}
As in the proof of Theorem~\ref{thm:CSC-ratio}, we regard $\Delta$ as a resistor network with resistances $r_\tau=1/x_\tau$ and attach a current generator~$\bm\sigma$ with vertices $\ell_1,\dots,\ell_{d+1}$ and current
\[i_{\bm\sigma}=\prod_{q=1}^{d+1}D_{\ell_q}.\]
We define $\ell_0=0$.

Now, define a current vector $I_{\bm\sigma}=(i_\tau)_{\tau\in \Delta^{\bm\sigma}_d}$ as follows.  Set
\begin{align}
i_\tau&=0  \text{ for all } \tau\not\in\shiftgen{\sigma}, \label{itau:0}\\
i_\sigma&=-i_{\bm\sigma}+\prod_{q=1}^{d+1}D_{\ell_q-1} =\prod_{q=1}^{d+1}D_{\ell_q-1} - \prod_{q=1}^{d+1}D_{\ell_q}.\label{isigma}
\end{align}
Suppose that $\tau=\{j_1<\cdots<j_{d+1}\}\in \shiftgen{\sigma}\sm\{\sigma\}$, i.e., $j_q\leq\ell_q$ for all $q \in[d+1]$, with at least one equality strict.  We define the current on $\tau$ as follows.
First, let $\pi(\tau)$ be the unique permutation on $[d+1]$ such that (a) $\pi(\tau)(q)=r$ if $j_q=\ell_r$ for some $r \in[d+1]$, and (b) $\pi(\tau)$ is (b) $\pi(\tau)$ is increasing on $[d+1]\sm \{q\in[d+1]:j_q \in\sigma\cap\tau\}$.
Second, let
\begin{subnumcases}{U^\tau_q =}
0
	& if $j_q < \ell_{q-1}$, \label{Utauq:1}
\\
D_{j_{q}}=D_{\ell_{q-1}}
	& if $j_q=\ell_{q-1}$, \label{Utauq:2} 
\\
-x_{j_q}
	& if $\ell_{q-1}< j_q<\ell_q$ (i.e., $j_q \in \tau \setminus \sigma$), \label{Utauq:3}  
\\
D_{j_q-1}=D_{\ell_q-1}
	& if $j_q=\ell_q$. \label{Utauq:4}
\end{subnumcases}
Finally, define $i_{\tau}=(-1)^{\inv(\pi(\tau))}\prod_{q=1}^{d+1}U^\tau_q$, where as before $\inv(\pi(\tau))$ denotes the number of inversions.  In summary, the entries of the current vector $I_{\bm\sigma}$ are
\begin{subnumcases}{i_\tau =}
\textstyle -i_{\bm\sigma}+\prod_{q=1}^{d+1}D_{\ell_q-1}
	& if $\tau=\sigma$, \label{current:sigma}
\\
\textstyle (-1)^{\inv(\pi(\tau))}{\prod_{q=1}^{d+1}U^{\tau}_{q}}
	& if $\tau\in\shiftgen{\sigma}\sm\{\sigma\}$, \label{current:insigma}
\\
0
	& if $\tau\not\in\shiftgen{\sigma}$. \label{current:zero}
\end{subnumcases}
We regard the weights $x_\tau$ as conductances, so by OL~\eqref{OL} the voltage vector $V_{\bm\sigma}=(v_\tau)_{\tau\in \Delta^{\bm\sigma}_d}$ is given by
\begin{align*}
v_{\tau}&=i_\tau/x_\tau \text{  for } \tau \in \Delta_d,\\
v_{\bm\sigma} &= v_\sigma = i_\sigma/x_\sigma.
\end{align*}

\underline{Claim 1: $I_{\bm \sigma}$ satisfies KCL~\eqref{KCL}.}

For each $\eta\in \Delta_{d-1}$ and $j\in[n+1]$,
define $p(\eta,j)$ by
\begin{equation} \label{petaj}
p(\eta,j)=|\{j'  \in \eta \st j'<j \}|.
\end{equation}
If $(\partial_{\Delta^{\bm \sigma},d})_{\eta}$ denotes the row vector of $\partial_{\Delta^{\bm \sigma},d}$ indexed by $\eta$, then
\begin{equation} \label{deldeltaeta}
(\partial_{\Delta^{\bm \sigma},d})_{\eta} \,I_{\bm \sigma}
= \begin{cases}
\sum_{j \in[\ell_{d+1}] \setminus \eta }(-1)^{{p(\eta,j)}}i_{\eta \cup j}+(-1)^{p(\eta,\sigma \sm \eta)}i_{\bm \sigma} & \text{ if } \eta \subset \sigma,\\
\sum_{j \in[\ell_{d+1}] \setminus \eta  }(-1)^{{p(\eta,j)}}i_{\eta \cup j} & \text{ if } \eta \not\subset \sigma.
\end{cases}
\end{equation}
Claim~1 asserts that $(\partial_{\Delta^{\bm \sigma},d})_{\eta}\,I_{\bm \sigma}=0$. 
This follows immediately from~\eqref{current:zero} if $\eta\notin\shiftgen{\sigma}$.  In the remaining cases,
the idea of the proof will be to express $(\partial_{\Delta^{\bm \sigma},d})_{\eta}\,I_{\bm \sigma}$ as a telescoping series.
\smallskip

\underline{\textit{Case 1a:} $\eta\subset\sigma$}, say $\eta=\sigma\sm\ell_{s}=\{\ell_1<\cdots<\ell_{s-1}<\ell_{s+1}<\cdots<\ell_{d+1}$\}.  Observe that, for $j\not\in\eta$, the face $\eta\cup j$ belongs to $\shiftgen{\sigma}$ if and only if $j\leq\ell_s$.

Let $q \in[s]$.  If $\ell_{q-1}<j<\ell_{q}$, then
$\pi(\eta\cup j)=(1,2,\dots,q-1,s,q,q+1,\dots,s-1,s+1,\dots,d+1)$, which has $s-q$ inversions.
Putting $\tau=\eta\cup j$ in~\eqref{current:insigma} gives
\begin{equation} \label{ieta}
 i_{\eta \cup j} = (-1)^{s-q} \left(\prod_{r=1}^{q-1}{D_{{\ell_{r}}-1}} \right)\left(-x_j\right)\left(\prod_{r=q+1}^{s}{D_{\ell_{r-1}}}\right)\left(\prod_{r=s+1}^{d+1}{D_{\ell_{r}-1}}\right).
\end{equation}

Define $A(0)=0$ and
\begin{equation} \label{aq}
A(q)=A(q,s)=\left(\prod_{r=1}^{q}D_{\ell_{r}-1}\right)\left( \prod_{r=q+1}^{s}D_{\ell_{r-1}}\right)
\end{equation}
for $q>0$. Then summing~\eqref{ieta} over $\ell_{q-1}<j < \ell_q$, for each $q>0$, gives
\begin{align}
\sum_{ \ell_{q-1}<j < \ell_q} (-1)^{s-q} i_{\eta \cup j}
&= \left(\prod_{r=s+1}^{d+1}{D_{\ell_{r}-1}}\right) \left(\prod_{r=1}^{q-1}{D_{\ell_{r}-1}} \right)\left(\prod_{r=q+1}^{s}{D_{\ell_{r-1}}}\right)\left(-\sum_{\ell_{q-1}<j<\ell_q}{x_{j}}\right) \notag\\
&= \left(\prod_{r=s+1}^{d+1}{D_{\ell_{r}-1}}\right) \left(\prod_{r=1}^{q-1}{D_{\ell_{r}-1}} \right)\left(\prod_{r=q+1}^{s}{D_{\ell_{r-1}}}\right)\left(D_{\ell_{q-1}}-D_{\ell_q-1}\right)  \notag\\
&= \left(\prod_{r=s+1}^{d+1}{D_{\ell_{r}-1}}\right) \left(A(q-1)-A(q)\right). \label{telescope}
\end{align}
(Note that in the special case $q=1$, the terms $D_{\ell_0}=D_0$ and $A(0)$ vanish.)
Now summing~\eqref{telescope} over $q$ yields
\begin{align}
\sum_{q=1}^{s}\sum_{ \ell_{q-1}<j < \ell_q}
(-1)^{s-q} i_{\eta \cup j}
&= \sum_{q=1}^{s} \left(\prod_{r=s+1}^{d+1}{D_{\ell_{r}-1}}\right) \left(A(q-1)-A(q)\right) \notag
\\
&=
\left(\prod_{r=s+1}^{d+1}{D_{\ell_{r}-1}}\right) \sum_{q=1}^s  \left(A(q-1)-A(q)\right) \notag
\\
&=
-A(s)\prod_{r=s+1}^{d+1}{D_{\ell_{r}-1}} =-\prod_{r=1}^{d+1}{D_{\ell_{r}-1}}. \label{prodD}
\end{align}
Since $p(\eta,j)=q-1$ when $\ell_{q-1}<j<\ell_q$ and $p(\eta,\sigma\sm\eta)=s-1$, we obtain
\begin{align*}
(\partial_{\Delta^{\bm \sigma},d})_{\eta}\,I_{\bm \sigma}
&= \left(\sum_{q=1}^s\sum_{ \ell_{q-1}<j < \ell_q}(-1)^{q-1} i_{\eta \cup j}\right)+(-1)^{s-1}i_{\sigma}+(-1)^{s-1}i_{\bm\sigma}
	&&\text{(by~\eqref{deldeltaeta})}\\
&= (-1)^s\prod_{r=1}^{d+1}{D_{\ell_{r}-1}}+(-1)^{s-1}(i_{\sigma}+i_{\bm\sigma})
	&& \text{(by~\eqref{prodD})}\\
&= 0
	&& \text{(by~\eqref{current:sigma}).}\\
\end{align*}
\smallskip

\underline{\textit{Case 1b:} $\eta\in\shiftgen{\sigma}$ but $\eta \not\subseteq \sigma$}. Define
\[
M=\max\{q \in[d+1]:\ \ell_q\not\in\eta,\ \eta\cup\ell_q \in\shiftgen{\sigma} \}.
\]
The definition of a shifted complex implies that $M$ is well-defined (i.e., that at least one such $q$ exists).
Write $\eta \cup \ell_{M}=\{r_1<\dots<r_{d+1}\}$, so that $r_q\leq \ell_q$ for all $q$.  In fact $r_M=\ell_M$ (for otherwise $r_q=\ell_M$ for some $q>M$, but then $\eta\cup\ell_q\in\shiftgen{\sigma}$, contradicting the definition of $M$).
Define
\[
m=\max\{q \in[M-1]:\  r_q \neq \ell_q \},
\]
where by convention the maximum of the empty set is $0$. In particular, the definition of $m$ implies that $r_m<\ell_m$ (provided that $m>0$) and that $r_i=\ell_i$ for all $i$ with $m<i\leq M$ (the case $i=M$ was noted above).

In the remainder of Case~1b, we assume throughout that $j\notin\eta$. If $j > \ell_M$, then $i_{\eta \cup j}=0$ by the definition of $M$ and~\eqref{current:zero}. If on the other hand $j < \ell_m$, then the $(m+1)$st smallest element of $\eta\cup j$ is $\max(j,r_m)<\ell_m$, so $U^{\eta\cup j}_{m+1}=0$ by~\eqref{Utauq:1} and again $i_{\eta \cup j}=0$ by~\eqref{current:zero}.  Therefore, it is sufficient to consider $i_{\eta \cup j}$ for $j \in[\ell_{m}, \ell_{M}]$.

First, for a fixed $r \notin [m+1,M]$, observe that
$U_{r}^{\eta\cup j}$ is independent of the choice of $j\in[\ell_{m},\ell_{M}]\sm\eta=[\ell_{m},\ell_{M}]\sm\{\ell_{m+1},\dots,\ell_{M-1}\}$. So we may define
\[
U=(-1)^{p(\eta,\ell_{m})+1}(-1)^{\inv(\pi(\eta\cup \ell_m))} \prod_{r \notin [m+1,M]} U^{\eta \cup j}_r.
\]
It follows from ~\eqref{current:insigma} that 
\begin{align}
(-1)^{p(\eta,\ell_{m})} i_{\eta\cup \ell_{m}}
&= -(-1)^{p(\eta,\ell_{m})+1} (-1)^{\inv(\pi(\eta\cup \ell_{m}))}{\prod_{r=1}^{d+1}U^{\eta\cup \ell_{m}}_{r}} \notag\\
&=-U{\prod_{r=m+1}^M U^{\eta\cup \ell_{m}}_{r}}  \label{current:insigma:ell_m},
\end{align}
Second, for $j$ with $\ell_{m} < j \leq  \ell_{M}$, we can assume that $\ell_{r-1} < j < \ell_r$ for some $r \in [m+1,M]$ (where, for $r = M$, we also allow $j = \ell_r$). Then:
\begin{align*}
p(\eta,j)&=p(\eta,\ell_{m})+r-m-1 \quad\text{ and }\\
\inv(\pi(\eta\cup j))&=\inv(\pi(\eta \cup \ell_{m}))-(r-m),
\end{align*}
where $p(\eta,j)$ is defined as in~\eqref{petaj} and $\pi(\eta\cup j)$ is the permutation defined at the start of the proof. In particular, the numbers $p(\eta,j)+\inv(\pi(\eta\cup j))$ and $p(\eta,\ell_{m})+\inv(\pi(\eta \cup \ell_{m}))$ have opposite parity, so that
\[
U=(-1)^{p(\eta,j)}(-1)^{\inv(\pi(\eta\cup j))} \prod_{r \notin [m+1,M]}{U^{\eta \cup j}_{r}}.
\]
Therefore, ~\eqref{current:insigma} implies that
\begin{align}
(-1)^{p(\eta,j)} i_{\eta\cup j}
&= (-1)^{p(\eta,j)} (-1)^{\inv(\pi(\eta\cup j))}{\prod_{r=1}^{d+1}U^{\eta\cup j}_{r}} \notag\\
&= U{\prod_{r=m+1}^M U^{\eta\cup j}_{r}},  \label{current:insigma:2}
\end{align}
which we will use repeatedly in what follows. 

Define $B(0)=0$ and 
\[
B(q)=B(q,m,M)=\left(\prod_{r=m+1}^{q}{D_{\ell_{r}-1}} \prod_{r=q+1}^{M}{D_{\ell_{r-1}}}\right).
\]
for $q \in[m,M]$.

First, note that  $\eta\cup \ell_m=\{r_1<\cdots<r_m<\ell_m<\ell_{m+1}<\cdots<\ell_{M-1}<r_{M+1}<\cdots<r_{d+1}\}$.
Then by~\eqref{current:insigma:ell_m} and~\eqref{Utauq:2}
\begin{equation}\label{petajietaj:1}
(-1)^{p(\eta,\ell_m)} i_{\eta\cup \ell_m}
=- U\prod_{r=m+1}^{M} U^{\eta\cup \ell_m}_{r} =- U\prod_{r=m+1}^{M} D_{\ell_{r-1}}
= -U\cdot  B(m).
\end{equation}

Second, for each $q \in[m+1,M-1]$, summing~\eqref{current:insigma:2} for $\ell_{q-1}<j <\ell_q$ (where $\eta\cup j=\{r_1<\cdots<r_{q-1}<j<r_q<\cdots<r_{M-1}<\cdots\}$) and applying~\eqref{Utauq:2}--\eqref{Utauq:4} gives
\begin{align}
\sum_{ \ell_{q-1}<j <\ell_q}  (-1)^{p(\eta,j)} i_{\eta \cup j}
&= U\left(\prod_{r=m+1}^{q-1}{D_{\ell_{r}-1}} \prod_{r=q+1}^{M}{D_{\ell_{r-1}}}\right)\left(-\sum_{\ell_{q-1}<j<\ell_q}{x_{j}}\right)  \notag\\
&= U\left(\prod_{r=m+1}^{q-1}{D_{\ell_{r}-1}} \prod_{r=q+1}^{M}{D_{\ell_{r-1}}}\right)\left(D_{\ell_{q-1}}-D_{\ell_q-1}\right) \notag\\
&= U(B(q-1)-B(q)).  \label{petajietaj:2}
\end{align}
Third, summing~\eqref{current:insigma:2} for $\ell_{M-1}<j\leq\ell_M$ (where $\eta\cup j=\{r_1<\cdots<r_{M-1}<j<\cdots\}$) and applying \eqref{Utauq:3} and~\eqref{Utauq:4} gives
\begin{align}
\sum_{\ell_{M-1}<j \leq \ell_M}  (-1)^{p(\eta,j)} i_{\eta \cup j}
&= U\left(\prod_{r=m+1}^{M-1}{D_{j_{r}-1}}\right)\left(D_{\ell_M-1}-\sum_{\ell_{M-1}<j<\ell_M}{x_{j}}\right)  \notag\\
&=  U\left(\prod_{r=m+1}^{M-1}{D_{j_{r}-1}}\right)D_{\ell_{M-1}} \notag\\
&= U\cdot  B(M-1).  \label{petajietaj:3}
\end{align}
Combining~\eqref{petajietaj:1}, \eqref{petajietaj:2}, and~\eqref{petajietaj:3}, we conclude that
\begin{align*}
\sum_{ \ell_{m}  \leq j \leq \ell_{M}}{  (-1)^{p(\eta,j)} i_{\eta \cup j}} 
&=U\left( - B(m)+ \sum_{q=m+1}^{M-1}\left(B(q-1)-B(q)\right) +B(M-1)\right)
\\
&=0
\end{align*}
completing the proof of Claim~1.
\smallskip

\underline{Claim 2: $V=V_{\bm \sigma}$ satisfies KVL~\eqref{KVL}.}

That is, $V$ is orthogonal to $\ker\partial_{\Delta^{\bm\sigma},d}$, which is spanned by $[{\bm\sigma}]-[\sigma]$ together with the vectors $z_{\tau}=\partial_d[1\cup\tau]$, where $\tau$ ranges over $d$-faces of $\Delta$ not containing vertex~1.  Since $v_{\bm\sigma} = v_\sigma$, it suffices to show that $\langle V,z_\tau\rangle=0$ for every such~$\tau$.
\smallskip

\underline{\textit{Case 2a:} $\tau=\sigma$.}
Recall that we have assumed $\ell_1>1$.
Let $s\in[d+1]$; then $\pi(\sigma \setminus \ell_{s} \cup 1)$ is the cycle $(s,s-1,\dots,2,1)$, which has $s-1$ inversions, and \eqref{current:insigma} and~\eqref{Utauq:2}\dots\eqref{Utauq:4} give
\begin{align}
i_{\sigma \setminus \ell_{s} \cup 1}
&=(-1)^{s-1}(-x_1)\prod_{q=2}^{s}{D_{\ell_{q-1}}}\prod_{q=s+1}^{d+1}{D_{\ell_q-1}} \notag\\
&=(-1)^{s}x_1\prod_{q=2}^{s}{D_{\ell_{q-1}}}\prod_{q=s+1}^{d+1}{D_{\ell_q-1}}. \label{itau}
\end{align}
Moreover, using formula~\eqref{shifted-cycle} for $z_\sigma$ and applying \eqref{isigma} and~\eqref{itau}, we obtain
\begin{align*}
\langle V, z_\sigma \rangle
&= v_{\sigma} + \sum_{s=1}^{d+1}(-1)^s v_{\sigma \setminus \ell_{s} \cup 1}
\\
&=
\dfrac{\prod_{q=1}^{d+1}D_{\ell_q-1}-\prod_{q=1}^{d+1}D_{\ell_q}}{x_{\sigma}}
+ \sum_{s=1}^{d+1}\dfrac{x_1\prod_{q=2}^{s}{D_{\ell_{q-1}}}\prod_{q=s+1}^{d+1}{D_{\ell_q-1}}}{x_1 x_\sigma / x_{\ell_{s}}}
\\
&= \dfrac{1}{x_\sigma} \left[
\prod_{q=1}^{d+1}D_{\ell_q-1}-\prod_{q=1}^{d+1}D_{\ell_q} + \sum_{s=1}^{d+1}\left(\prod_{q=1}^{s-1}{D_{\ell_{q}}}\right)x_{\ell_{s}}\left(\prod_{q=s+1}^{d+1}D_{\ell_q-1}\right) \right]
\\
&= \dfrac{1}{x_\sigma} \left[
\prod_{q=1}^{d+1}D_{\ell_q-1}-\prod_{q=1}^{d+1}D_{\ell_q} + \sum_{s=1}^{d+1}\left(\prod_{q=1}^{s-1}{D_{\ell_{q}}}\right) (D_{\ell_s} - D_{\ell_s-1}) \left(\prod_{q=s+1}^{d+1}D_{\ell_q-1}\right) \right]
\\
&= 0
\end{align*}
since this expression is telescoping.
\smallskip

\underline{\textit{Case 2b:} $\tau \in \shiftgen{\sigma}\sm\{\sigma\}$ with $1 \notin \tau$.} Let $\tau=\{j_1<\cdots<j_{d+1}\}$. For convenience, let $\tau_{q}=\tau\setminus j_q \cup 1$. Then  the entry of $z_{\tau}$ corresponding to $\tau_q$ is $(-1)^{q}$, and so
\[
\langle V, z_\tau \rangle = v_\tau + \sum_{q=1}^{d+1} (-1)^q v_{\tau_q}.
\]
Define an integer 
\[
s=\max\{q: j_{r}=\ell_{r} \text{ for all } r \in[q]\}.
\]
Note that $0 \le s \le d$ by the assumption on $\tau$. If $q \geq s+2$, then the $(s+2)$nd smallest element of $\tau_q$ is $j_{s+1}<\ell_{s+1}$, so $U_{s+2}^{\tau_q}=0$ and $i_{\tau_{q}}=0$.  Therefore, the expression for $\langle V, z_\tau \rangle$ becomes
\begin{equation} \label{eq:claim2b}
\langle V, z_\tau \rangle = v_\tau + \sum_{q=1}^{s+1} (-1)^q v_{\tau_q},
\end{equation}
which we wish to show is zero.  Observe that for each $r \geq s+2$, the $d$-faces $\tau,\tau_1,\dots,\tau_{s+1}$ all have the same $r$th smallest element, namely $j_{r}$, so 
\[
U_{r}^{\tau}=U_{r}^{\tau_1}=\cdots=U_{r}^{\tau_{s+1}}.
\]
Denote this quantity by $U_{r}$, and let $U=\prod_{r \geq s+2}U_{r}$.
Then \eqref{current:insigma} gives
\begin{equation} \label{eq:claim2b:1}
(-1)^{\inv(\pi(\tau))}v_{\tau}=\dfrac{ \left(\prod_{r=1}^{s}D_{j_{r}-1} \right) \left( -x_{j_{s+1}}\right) U }{x_{\tau}}.
\end{equation}
For each $q \in[1,s+1]$, the permutation $\pi(\tau_q)$ is the product of the cycle $(q,q-1,\dots,2,1)$ with $\pi(\tau)$.  Therefore, for the case $1\leq q\leq s$ we have
\begin{align}
(-1)^{\inv(\pi(\tau))}\,v_{\tau_q}
&= (-1)^{\inv(\pi(\tau_q))+q-1}\,i_{\tau_q}/x_{\tau_q} \notag\\
&=-(-1)^q
\dfrac{ \left(-x_1 \right) \left(\prod_{r=2}^q D_{j_{r-1}}\right)\! 
\left(\prod_{r=q+1}^{s}D_{j_{r}-1} \right) \left( -x_{j_{s+1}}\right) U}
{x_{\tau_q}}
\notag\\
&= (-1)^q
\dfrac{\left(\prod_{r=2}^qD_{{j_{r-1}} }\right) x_{j_q} \left(\prod_{r=q+1}^{s}D_{j_{r}-1} \right) \left( -x_{j_{s+1}}\right) U}
{x_{\tau}},
\label{eq:claim2b:2}
\end{align}
while for the case $q=s+1$ we have
\begin{align}
(-1)^{\inv(\pi(\tau))}v_{\tau_{s+1}}
&=
(-1)^{\inv(\pi(\tau_{s+1}))+s}i_{\tau_{s+1}}/x_{\tau_{s+1}}\notag\\
&=
(-1)^{s} \frac{(-x_1)\left(\prod_{r=2}^{s+1} D_{j_{r-1}}\right) U}{x_{\tau_{s+1}}}\notag\\
&=
(-1)^{s} \frac{(-x_{j_{s+1}})\left(\prod_{r=2}^{s+1} D_{j_{r-1}}\right) U}{x_\tau}. \label{eq:claim2b:3}
\end{align}

Now substituting \eqref{eq:claim2b:1}--\eqref{eq:claim2b:3} into \eqref{eq:claim2b} gives

\begin{align*}
\frac{(-1)^{\inv(\pi(\tau))} x^\tau \langle V,z_\tau\rangle }{x_{j_{s+1}}U}
&= 
-\prod_{r=1}^{s} D_{j_{r}-1}
~-~
\sum_{q=1}^{s} 
\left(\prod_{r=2}^q D_{j_{r-1}}\right)
x_{j_q} 
\left(\prod_{r=q+1}^{s} D_{j_{r}-1} \right)
~+~
\prod_{r=2}^{s+1} D_{j_{r-1}}
\end{align*}
which vanishes by a calculation similar to that of Case~2a.
This completes the proof of Claim~2.
\smallskip

Finally, using Equation~\eqref{eq:Ratio}, we obtain the desired ratio of tree-numbers:
\[
\dfrac{\hat{k}_d(\Delta)}{\hat{k}_d{(\Psi)}}
= \dfrac{i_{\bm\sigma}}{i_{{\bm\sigma}}+i_{\sigma}}
= \prod_{q=1}^{d+1} \dfrac{D_{\ell_q}}{D_{\ell_q-1}}.\qedhere
\]
\end{proof}

\begin{ex}
Let $\sigma=235$ and $\Delta=\shiftgen{\sigma}$.  The currents $i_{\bm\sigma}$ and $i_{\sigma}$ are then
\begin{align*}
i_{\bm\sigma}&=(x_1+x_2)(x_1+x_2+x_3)(x_1+x_2+x_3+x_4+x_5) &&= D_2D_3D_5,\\
i_{\sigma}&=x_1(x_1+x_2)(x_1+x_2+x_3+x_4)-i_{\bm\sigma} &&= D_1D_2D_4-i_{\bm\sigma}.
\end{align*}
The currents of facets $\tau$ with $\dim(\tau \cap \sigma)=1$ are
\[
i_{135}=-x_1D_2D_4,\quad i_{125}=x_1D_2D_4,\quad i_{123}=-x_1D_2D_3,\quad  i_{234}=-D_1D_2x_4.
\]
The currents of facets $\tau$ with $\dim(\tau \cap \sigma)=0$ are 
\[
i_{124}=-x_1D_2x_4, \text{ and } i_{134}=x_1D_2x_4.
\]
For $\Psi=\Delta\setminus\sigma$, we have
\[
\dfrac{\hat{k}_2{(\Delta)}}{\hat{k}_2(\Psi)}= \dfrac{D_{2}D_{3}D_{5}}{D_{1}D_{2}D_{4}}=\dfrac{D_{3}D_{5}}{D_{1}D_{4}}.
\]
\end{ex}

\begin{ex}
Let $\sigma=246$ and $\Delta=\shiftgen{\sigma}$.  The currents $i_{\bm\sigma}$ and $i_{\sigma}$ are then
\begin{align*}
i_{\bm\sigma}&=(x_1+x_2)(x_1+x_2+x_3+x_4)(x_1+x_2+x_3+x_4+x_5+x_6)&&=D_2D_4D_6,\\
i_{\sigma}&=x_1(x_1+x_2+x_3)(x_1+x_2+x_3+x_4+x_5)-i_{\bm\sigma}&&=D_1D_3D_5-i_{\bm\sigma}.
\end{align*}
The currents of facets $\tau$ with $\dim(\tau \cap \sigma)=1$ are
\begin{align*}
i_{146} &=-x_1D_3D_5, & i_{126} &= x_1D_2D_5, & i_{124} &= -x_1D_2D_4,\\
i_{236} &=-D_1x_3D_5, & i_{234} &= D_1x_3D_4, & i_{245} &= -D_1D_3x_5.
\end{align*}
The currents of facets $\tau$ with $\dim(\tau \cap \sigma)=0$ are 
\begin{align*}
i_{136} &= x_1x_3D_5, & i_{235} &= D_1x_3x_5, & i_{134} &= -x_1x_3D_4,\\
i_{145} &= x_1D_3x_5, & i_{125} &= -x_1D_2x_5, & i_{123} &= 0.
\end{align*}
The other current is
\[
 i_{135} = -x_1x_3x_5.
\]
For $\Psi=\Delta\setminus\sigma$, we have
\[
\dfrac{\hat{k}_2{(\Delta)}}{\hat{k}_2(\Psi)}= \dfrac{D_{2}D_{4}D_{6}}{D_{1}D_{3}D_{5}}.
\]
\end{ex}

\begin{ex} Let $G$ be the threshold graph generated by an edge $\sigma=\ell_1 \ell_2$ with $1<\ell_1<\ell_2$. The current $i_{\bm\sigma}$ of ${\bm\sigma}$ and the current $i_{\sigma}$ of $\sigma$ are 
\[
i_{\bm\sigma}=D_{\ell_1}D_{\ell_2} \quad\text{and}\quad i_{\sigma}=D_{\ell_1-1}D_{\ell_2-1}-i_{\bm\sigma}.
\]
For $j_1,j_2 \in[\ell_2]$ with $j_1<j_2$ except for $(j_1,j_2)=(\ell_1,\ell_2)$, the current $i_{j_1j_2}$ is given by
\begin{equation} \label{eq:threshold_current}
i_{j_1j_2}=\begin{cases}
-x_{j_1}D_{\ell_2-1} & \text{ if } j_1<\ell_1, j_2=\ell_2,\\
-D_{\ell_1-1}x_{j_2} & \text{ if } j_1=\ell_1, j_2<\ell_2,\\
x_{j_1}D_{\ell_1} & \text{ if } j_1<\ell_1, j_2=\ell_1,\\
x_{j_1}x_{j_2} & \text{ if } j_1<\ell_1<j_2<\ell_2,\\
0 & \text{ otherwise.}
\end{cases}
\end{equation}
Thus
\[
\dfrac{\hat{k}_1{(G)}}{\hat{k}_1(G\setminus\sigma)}= \dfrac{D_{\ell_1}D_{\ell_2}}{D_{\ell_1-1}D_{\ell_2-1}}.
\]
In particular, let $\ell_2=\ell_1+1$ so that the second and fourth cases in ~\eqref{eq:threshold_current} do not happen. Then for $j_1,j_2 \in[\ell_2]$ with $j_1<j_2$ except for $(j_1,j_2)=(\ell_1,\ell_2)$, the current $i_{j_1j_2}$ is given by
\begin{equation*} 
i_{j_1j_2}=\begin{cases}
-x_{j_1}D_{\ell_1} & \text{ if } j_1<\ell_1, j_2=\ell_2,\\
x_{j_1}D_{\ell_1} & \text{ if } j_1<\ell_1, j_2=\ell_1,\\
0 & \text{ otherwise,}
\end{cases}
\end{equation*}
and
\[
\dfrac{\hat{k}_1{(G)}}{\hat{k}_1(G\setminus\sigma)}=\dfrac{D_{\ell_1}D_{\ell_2}}{D_{\ell_1-1}D_{\ell_2-1}}=\dfrac{D_{\ell_2}}{D_{\ell_1-1}}.
\]
\end{ex}

\subsection{Enumeration of simplicial spanning trees}
We now use simplicial effective resistance to enumerate the simplicial spanning trees of shifted complexes.  We start with some definitions that do not require shiftedness, but which will be useful in the proof.

\begin{defn}\label{def:block}
Let $\sigma=\{v_1 < \cdots < v_k\}$ be an ordered set of distinct positive integers.  A {\bf block} of $\sigma$ is a subset $B = \{v_p, v_{p+1}, \ldots, v_{q-1}, v_q\}$, where for $p \leq i < q$, we have $v_{i+1} = v_i + 1$, and where $v_p - 1, v_q + 1 \not\in \sigma$.  
In this case, we set $s_B = v_p$ and $t_B = v_q$ to be the starting and ending vertices, respectively, of the block $B$.  In other words, $B$ is a maximal subset of consecutive integers in $\sigma$, starting at $s_B$ and ending at $t_B$.  We may evidently partition $\sigma$ into disjoint blocks, $\sigma = \cup_{B \in \B(\sigma)} B$, where $\B(\sigma)$ is the set of blocks in $\sigma$.
\end{defn}

\begin{defn}\label{def:label}
Observe that each covering relation in Gale order is of the form $\lambda\coveredby\mu$, where
\[\lambda = \{a_1,\ldots,a_k\}, \qquad \mu = \{a_1,\ldots,a_{j-1},a_j+1,a_{j+1},\ldots,a_k\}.\]
We define the \textbf{label} of this covering relation as $\ell(\lambda\coveredby\mu) = a_j$.  
\end{defn}

Note that the covering relation is possible only if $a_j+1 < a_{j+1}$, 
so $a_j$ must be the end of a block of $\lambda$ and $a_j+1$ must be the start of a block of $\mu$.  Therefore, the covering relations involving $\sigma$ are, for each $B \in \B(\sigma)$:
\begin{itemize}
	\item $\sigma \coveredby \sigma - \{t_B\} \dju \{t_B + 1\}$ with label $t_B$; and
	\item $\sigma - \{s_B\} \dju \{s_B - 1\} \coveredby \sigma$ with label $s_B - 1$, if $s_B > 1$.
\end{itemize}
In particular, if $1 \not\in \sigma$, then $\sigma$ covers exactly $|B(\sigma)|$ sets and is covered by exactly $|B(\sigma)|$ sets.

Now assume that $\Delta$ is a $d$-dimensional shifted complex.  Recall the definitions of $\Gamma$ and $\Lambda$ from the beginning of this section.

\begin{defn}
When  $\sigma \in \Gamma$ and $\tau \not\in\Gamma$ are facets of $\Delta$, and $\sigma\coveredby\tau$, then~\cite[Sec.~4.3]{DKM} calls $(\sigma,\tau)$ a \textbf{critical pair} of $\Gamma$, and defines the {\bf signature} of that critical pair to be $(S,T)$ where $S$ is unimportant to us (it is useful for a finer weighting than we consider here; see \cite[Secs.~6--9]{DKM1}), and $T = \{2,\ldots,\ell(\sigma\coveredby\tau)\}$.  Let $\Sig(\Gamma)$ denote the set of all signatures of $\Gamma$.  
\end{defn}

\begin{defn}
For any simplicial complex $\Sigma$ and vertex $v$, denote by $\deg_{\Sigma}(v)$ the {\bf degree} of $v$ in $\Sigma$, the number of facets of $\Sigma$ containing $v$. 
\end{defn}

We can now state and prove the main theorem on spanning trees of shifted complexes.  For a nonempty set $T\subseteq\Nn$, let $\max T$ denote its maximum element.

\begin{thm}\label{thm:shifted}
\[
	\hat{k}_d(\Delta) = x_1^{\abs{\Lambda_{d-1}}}\prod_i x_i^{\deg_\Lambda(i)}
				\prod_{(S,T) \in \Sig(\Gamma)} \frac{D_{\max T}}{D_1}.
\]
\end{thm}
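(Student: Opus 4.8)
The plan is to follow the template of the proof of Theorem~\ref{thm:CSC-enumeration}: realize $\Delta$ by starting from its canonical spanning tree $1\ast\Lambda$ and attaching the facets of $\Gamma_d$ one at a time, each time in a way that keeps the complex shifted, and to multiply together the tree-number ratios supplied by Theorem~\ref{thm:SC-ratio}. First I would handle the tree. Since $1\ast\Lambda$ is a cone it is contractible, hence torsion-free, so $\hat{k}_d(1\ast\Lambda)$ is just the product of its facet weights. One checks that $\Lambda$ is pure of dimension $d-1$ (if $\rho\in\Lambda$ then $\rho\cup\{1\}$ lies in a facet $F$ of $\Delta$, which must contain $1$, and then $F\sm 1\in\Lambda_{d-1}$ contains $\rho$), so the facets of $1\ast\Lambda$ are exactly the sets $\{1\}\cup\rho$ with $\rho\in\Lambda_{d-1}$, and therefore
\[
\hat{k}_d(1\ast\Lambda)=\prod_{\rho\in\Lambda_{d-1}}\Big(x_1\prod_{v\in\rho}x_v\Big)=x_1^{\abs{\Lambda_{d-1}}}\prod_i x_i^{\deg_\Lambda(i)},
\]
which is exactly the first two factors of the claimed formula.

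Next I would attach the facets of $\Gamma_d$ in any linear extension of Gale order. At each stage the current complex is $1\ast\Lambda$ together with a Gale order ideal of $\Gamma_d$, which is again a pure shifted $d$-complex; the facet $\sigma$ being added is maximal there in Gale order (being Gale-minimal among the not-yet-added facets, and with no facet of $1\ast\Lambda$ above it since those all contain $1$ while $\min\sigma\geq2$), and its proper faces are all present because $(1\ast\Lambda)^{(d-1)}=\Delta^{(d-1)}$. So Theorem~\ref{thm:SC-ratio} applies and contributes the factor $\prod_{v\in\sigma}D_v/D_{v-1}$; multiplying the successive ratios over all $\sigma\in\Gamma_d$ gives
\[
\hat{k}_d(\Delta)=\hat{k}_d(1\ast\Lambda)\prod_{\sigma\in\Gamma_d}\ \prod_{v\in\sigma}\frac{D_v}{D_{v-1}}
\]
(and if $\Delta$ is a cone over vertex~$1$ there are no facets to add and $\Sig(\Gamma)=\varnothing$, so the formula is immediate).

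It then remains to prove the purely combinatorial identity $\prod_{\sigma\in\Gamma_d}\prod_{v\in\sigma}D_v/D_{v-1}=\prod_{(S,T)\in\Sig(\Gamma)}D_{\max T}/D_1$. The approach: inside a single block $B$ of $\sigma$ the sub-product $\prod_{v=s_B}^{t_B}D_v/D_{v-1}$ telescopes to $D_{t_B}/D_{s_B-1}$, so the left-hand side equals $\prod_{\sigma\in\Gamma_d}\prod_{B\in\B(\sigma)}D_{t_B}/D_{s_B-1}$. Now read each numerator $D_{t_B}$ as the covering relation $\sigma\coveredby\sigma-\{t_B\}\dju\{t_B+1\}$ (label $t_B$) and each denominator $D_{s_B-1}$ as the covering relation $\sigma-\{s_B\}\dju\{s_B-1\}\coveredby\sigma$ (label $s_B-1$); these are precisely the Gale covering relations incident to the facets of $\Gamma_d$, each appearing once ``from below'' and once ``from above''. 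A covering relation both of whose ends lie in $\Gamma_d$ contributes a numerator and a denominator with the same label and cancels. An up-cover leaving $\Gamma_d$ cannot land in $1\ast\Lambda$ — its label exceeds $1$, so the larger set still avoids $1$ — hence leaves $\Delta$, and the surviving numerators are exactly the critical pairs, contributing $\prod_{(S,T)\in\Sig(\Gamma)}D_{\max T}$. A down-cover entering $\Gamma_d$ from outside must come from $1\ast\Lambda$, which happens exactly when $s_B=2$, so the surviving denominators contribute $D_1^{\deg_\Gamma(2)}$. Finally, counting the covering relations incident to $\Gamma_d$ from both ends (each is recorded once per lower endpoint and once per upper endpoint, and both totals equal $\sum_{\sigma\in\Gamma_d}\abs{\B(\sigma)}$) shows $\deg_\Gamma(2)$ equals the number of critical pairs of $\Gamma$, which, since a critical pair is determined by its signature \cite{DKM1,DKM}, is $\abs{\Sig(\Gamma)}$; this reconciles the two powers of $D_1=x_1$ and finishes the proof. (Equivalently one may collect exponents vertex by vertex: the exponent of $D_\ell$ on the left is $\deg_\Gamma(\ell)-\deg_\Gamma(\ell+1)$, and the ``swap'' map $\sigma\mapsto\sigma-\{\ell+1\}\dju\{\ell\}$ injects the facets of $\Gamma_d$ containing $\ell+1$ but not $\ell$ into those containing $\ell$ but not $\ell+1$, the complement consisting of those $\sigma'$ for which $(\sigma',\,\sigma'-\{\ell\}\dju\{\ell+1\})$ is a critical pair.)

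The main obstacle is this last identity. Theorem~\ref{thm:SC-ratio} does the analytic work, and the tree computation and the inductive step are routine adaptations of the color-shifted argument; what requires care is organizing the telescoping of the $D_v$-ratios into the covering relations of Gale order, carrying out the cancellation while correctly separating covers that leave $\Delta$ from covers that drop into the cone $1\ast\Lambda$, and tracking the leftover powers of $D_1=x_1$ so that they match the number of critical pairs.
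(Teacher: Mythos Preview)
Your proposal is correct and follows essentially the same route as the paper's proof: build $\Delta$ from the cone $1\ast\Lambda$, attach the facets of $\Gamma_d$ one at a time (you use an arbitrary linear extension of Gale order where the paper uses lexicographic order, which is such a linear extension), apply Theorem~\ref{thm:SC-ratio} at each step, telescope the $D$-ratios within blocks, reinterpret the block endpoints as labels of Gale covering relations, cancel the internal covers, and match the surviving numerator factors with critical pairs and the surviving denominator factors with $D_1$'s. Your explicit identification of the number of surviving $D_1$'s as $\deg_\Gamma(2)$ and the brief alternative ``exponent of $D_\ell$'' computation are pleasant extras, but the argument is the same as the paper's.
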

This formula was proved in~\cite[eqn.~25]{DKM} using a different recursive property of shifted complexes (namely, that they are near-cones).  Here we give a proof using simplicial effective resistance.

\begin{proof}
We start building $\Delta$ from its spanning tree $1 \ast \Lambda$.  Since it is a spanning tree, 
$\hat{k}_d(1 \ast \Lambda)$ is simply the product of the weights of its facets, so
\begin{equation}\label{1Lambda}
	\hat{k}_d(1 \ast \Lambda)
		= \prod_{\sigma \in 1 \ast \Lambda_{d-1}}\prod_{i \in \sigma} x_i
		= \prod_{\rho \in \Lambda_{d-1}}x_1 \prod_{i \in \rho} x_i
		= x_1^{\abs{\Lambda_{d-1}}}\prod_i x_i^{\deg_\Lambda(i)}.
\end{equation}

Now we turn our attention to the remainder of $\Delta$.  We will add one facet of $\Gamma$ at a time in lexicographic order.  For each facet $\sigma \in \Gamma$, define $\Delta_{< \sigma}$ to be the complex generated by all of $1 \ast \Lambda$ and the facets added before $\sigma$ (the facets lexicographically earlier than $\sigma$).  
Say $\sigma=\{v_1 < \cdots < v_{d+1}\}$.  Since $\sigma \in \Gamma$, we know $v_1 > 1$.  
Therefore, we can apply Theorem~\ref{thm:SC-ratio} to compute
\begin{equation}\label{eq:quotient}
	\frac{\hat{k}_d(\Delta_{< \sigma} \cup \sigma)}{\hat{k}_d(\Delta_{< \sigma})} 
		= \prod_{1 \leq i \leq d+1} \frac{D_{v_i}}{D_{v_i-1}} 
		= \prod_{B \in \B(\sigma)} \prod_{s_B \leq v_i \leq t_B}\frac{D_{v_i}}{D_{v_i-1}}
		= \prod_{B \in \B(\sigma)} \frac{D_{t_B}}{D_{s_B-1}}
		= \frac{\prod_{\tau \covers \sigma}D_{\ell(\sigma \coveredby \tau)}}{\prod_{\rho \coveredby \sigma}D_{\ell(\rho \coveredby \sigma)}}.
\end{equation}

By applying Equation~\eqref{eq:quotient} recursively,
\begin{align}
\hat{k}_d(\Delta)
&= \hat{k}_d(1 \ast \Lambda)\prod_{\sigma \in \Gamma_d}
	\frac{\hat{k}_d(\Delta_{< \sigma} \cup \sigma)}{\hat{k}_d(\Delta_{< \sigma})} 
	= \hat{k}_d(1 \ast \Lambda)\prod_{\sigma \in \Gamma_d}\frac{\prod_{\tau \covers \sigma}D_{\ell(\sigma \coveredby \tau)}}{\prod_{\rho \coveredby \sigma}D_{\ell(\rho \coveredby \sigma)}}\notag\\
	&= \hat{k}_d(1 \ast \Lambda)\frac{\prod_{\sigma \in \Gamma_d, \tau \not\in\Gamma_d, \sigma\coveredby\tau}D_{\ell(\sigma\coveredby\tau)}}
			{\prod_{\sigma \in \Gamma_d, \rho \not\in\Gamma_d, \rho\coveredby\sigma}D_{\ell(\rho\coveredby\sigma})}.
\label{eq:cancel}
\end{align}
The final equation follows from cancellation: every pair $\lambda \coveredby \mu$ where $\lambda$ and $\mu$ are both faces in $\Gamma_d$ appears once in the numerator when $\sigma = \lambda$, and once in the denominator when $\sigma = \mu$.  So the only terms that remain in the numerator are those where $\sigma \coveredby \tau$, but $\tau \not\in \Gamma_d$, and the only terms that remain in the denominator are those where $\rho \coveredby \sigma$, but $\rho \not\in \Gamma_d$.

Furthermore, the only way that the conditions $\sigma \in \Gamma_d$, $\rho \not\in\Gamma_d$, and $\rho\coveredby\sigma$ can all hold is when $1 \in \rho$, so $\ell(\rho\coveredby\sigma) = 1$, and so all the factors in the denominator in~\eqref{eq:cancel} are $D_1$.  And since every face in $\Gamma_d$ has as many covering relations going up (contributing a factor to the numerator) as going down (contributing a factor to the denominator), and each cancellation removes exactly one factor from the numerator and one factor from the denominator, the number of $D_1$ factors in the denominator equals the total number of factors in the numerator.

Meanwhile, the condition $\sigma \in \Gamma_d, \tau \not\in\Gamma_d, \sigma\coveredby\tau$ in the numerator is precisely the condition for $(\sigma, \tau)$ to be a critical pair of $\Gamma_d$, and the maximum element of $T$ in the signature $(S,T)$ of $(\sigma, \tau)$ is $\ell(\sigma \coveredby \tau)$.  We can now rewrite $\hat{k}_d(\Delta)$ as 
\begin{equation}\label{Gamma}
	\hat{k}_d(\Delta) 
		= \hat{k}_d(1 \ast \Lambda)\frac{\prod_{\sigma \in \Gamma_d, \tau \not\in\Gamma_d, \sigma\coveredby\tau}D_{\ell(\sigma\coveredby\tau)}}
		{\prod_{\sigma \in \Gamma_d, \rho \not\in\Gamma_d, \rho\coveredby\sigma}D_{\ell(\rho\coveredby\sigma})}
		= \hat{k}_d(1 \ast \Lambda)\prod_{(S,T) \in \Sig(\Gamma)} \frac{D_{\max T}}{D_1}.
\end{equation}
The result now follows by combining Equations~\eqref{1Lambda} and~\eqref{Gamma}.
\end{proof}

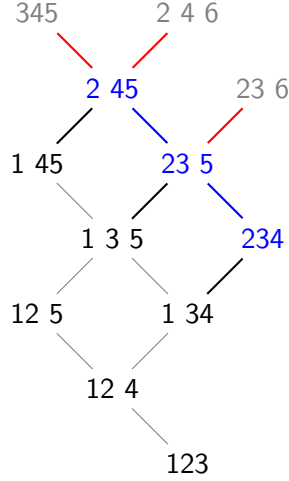
\begin{figure}
\begin{center}
\begin{tikzpicture}
	\node (123) at (0,0) {\sf123};
	\node (124) at (-1,1) {\sf12 4};
	\node (125) at (-2,2) {\sf12 5};
	\node (134) at (0,2) {\sf1 34};
	\node (135) at (-1,3) {\sf1 3 5};
	\node (145) at (-2,4) {\sf1 45};
	
	\node [blue,thick] (234) at (1,3) {\sf234};
	\node [blue,thick] (235) at (0,4) {\sf23 5};
	\node [blue,thick] (245) at (-1,5) {\sf2 45};
	
	\node [gray,thick] (236) at (1,5) {\sf23 6};
	\node [gray,thick] (345) at (-2,6) {\sf345};
	\node [gray,thick] (246) at (0,6) {\sf2 4 6};

	\draw [gray] (123) -- (124);
	\draw [gray] (124) -- (125);
	\draw [gray] (124) -- (134);
	\draw [gray] (134) -- (135);
	\draw [gray] (125) -- (135);
	\draw [gray] (135) -- (145);
	
	\draw [black,thick] (134) -- (234);
	\draw [black,thick] (135) -- (235);
	\draw [black,thick] (145) -- (245);
	
	\draw [blue,thick] (234) -- (235);
	\draw [blue,thick] (235) -- (245);
	
	\draw [red,thick] (235) -- (236);
	\draw [red,thick] (245) -- (345);
	\draw [red,thick] (245) -- (246);
\end{tikzpicture}
\end{center}
\caption{Componentwise partial order of facets of a shifted complex.}\label{fig:cptwise}
\end{figure}

\begin{ex}
Consider the shifted complex $\Delta$ whose facets are the Gale order ideal $\shiftgen{245}$.  The Gale order on the facets, and also including some of the non-faces that cover them in Gale order, is shown in Figure~\ref{fig:cptwise}.  Faces in $1 \ast \Lambda$ are colored black, faces in $\Gamma$ are colored blue, and the faces outside $\Delta$ that cover faces in $\Gamma$ are colored gray.  Edges entirely in $1 \ast \Lambda$ are colored gray, edges between $1 \ast \Lambda$ and $\Gamma$ are colored black, edges entirely in $\Gamma$ are colored blue, and edges between faces in $\Gamma$ and faces outside $\Delta$ are colored red.  The diagram does not include faces outside of $\Delta$ that cover only faces in $1 \ast \Lambda$, since they do not enter our calculations.

Let us first illustrate Definitions~\ref{def:block} and~\ref{def:label} with $\sigma = 235$, whose blocks are $23$ and $5$.  We will emphasize the block structure by writing $\sigma$ as $23\ 5$, and similarly for other faces in $\Delta$.  The faces covering $\sigma$ are $2\ 45$ with label $3$ and $23\ 6$ with label $5$.  The faces that $\sigma$ covers are $1\ 3\ 5$ with label $1$ and $234$ with label $4$.

We next illustrate Equation~\eqref{eq:quotient}.  When we add $\sigma = 23\ 5$, the quotient is
\[
\frac{D_2 D_3\ D_5}{D_1 D_2\ D_4}
= \frac{D_3}{D_1}\frac{D_5}{D_4}
= \frac{D_{\ell(23\ 5 \coveredby 2\ 45)}D_{\ell(23\ 5 \coveredby 23\ 6)}}{D_{\ell(1\ 3\ 5 \coveredby 23\ 5)}D_{\ell(234 \coveredby 23\ 5)}}.
\]
But when we multiply together all such quotients, as in Equation~\eqref{eq:cancel},  some of the terms above will be cancelled.  In particular, $D_{\ell(23\ 5 \coveredby 2\ 45)}$ will appear in the denominator when $2\ 45$ is added, and $D_{\ell(234 \coveredby 23\ 5)}$ was in the numerator when $234$ was added.  All told, all the labels on edges in blue, between faces that are both in $\Gamma$, will contribute cancelling factors, in the numerator when the lower face on the edge is added, but in the denominator when the upper face on the edge is added.  This will leave only, in the numerator, the factors corresponding to labels on red edges, which here are 
\[
	D_{\ell(23\ 5 \coveredby 23\ 6)} D_{\ell(2\ 45 \coveredby 2\ 4\ 6)} 
	D_{\ell(2\ 45 \coveredby 345)} 
	= D_5^2 D_2,
\] 
and, in the denominator, the factors corresponding to labels on black edges, which here are
\[
	D_{\ell(1\ 34 \coveredby 234)} D_{\ell(1\ 3\ 5 \coveredby 23\ 5)} 
	D_{\ell(1\ 45 \coveredby 2\ 45)} 
	= D_1^3. 
\]
The red edges correspond to signatures, and the black edges necessarily are labeled by $1$.

Note how we started with 5 edges going up and 5 edges going down (1 up and 1 down from $234$ and 2 up and 2 down each from $23\ 5$ and $2\ 45$) from the faces in $\Gamma$, but 2 pairs were cancelled (one cancelling pair for each blue edge), leaving 3 edges going up and 3 edges going down.  This then corresponds to having 3 factors in the numerator and 3 factors in the denominator.
\end{ex}

\bibliographystyle{amsalpha}
\bibliography{biblio}
\end{document}